\newcommand{\bsm}{\left(\begin{smallmatrix}} 
\newcommand{\esm}{\end{smallmatrix}\right)}  
\newcommand {\eh}{{\textstyle \frac{1}{2}}}
\newcommand{\norm}[1]{\Vert #1 \Vert}
\newcommand{\sprod}[2]{\langle #1, #2 \rangle}
\newcommand{\dds}{\frac{d}{ds}}
\newcommand{\ddt}{\frac{d}{dt}}
\newcommand{\dd}[2]{\frac{\partial #1}{\partial #2}}
\newcommand{\intd}{\, \text{d}}
\newcommand{\sd}{\, | \,}
\renewcommand{\phi}{\varphi}
\renewcommand{\epsilon}{\varepsilon}
\newcommand{\bR}{\mathbb{R}}
\newcommand{\bN}{\mathbb{N}}
\newcommand{\bZ}{\mathbb{Z}}
\DeclareMathOperator{\im}{im}
\DeclareMathOperator{\supp}{supp}
\DeclareMathOperator{\conv}{conv}
\newcommand{\atr}{\alpha^+}
\newcommand{\Itr}{I^+}
\newtheorem{thm}{Theorem}[section]
\newtheorem{prop}[thm]{Proposition}
\newtheorem{lem}[thm]{Lemma}
\newtheorem{cor}[thm]{Corollary}
\newtheorem{defin}[thm]{Definition}
\newtheorem{rem}[thm]{Remark}
\newtheorem{ex}[thm]{Example}
\begin{document}

\title{Positive topological entropy for multi-bump magnetic fields}

\author{Andreas Knauf\\Department Mathematik\\Friedrich-Alexander-Universit\"at Erlangen-N\"urnberg \and Frank Schulz\\Fakult\"at f\"ur Mathematik\\Technische Universit\"at Dortmund \and Karl Friedrich Siburg\\Fakult\"at f\"ur Mathematik\\Technische Universit\"at Dortmund}

\maketitle

\abstract{We study the dynamics of a charged particle in a planar magnetic field which consists of $n\geq 2$ disjoint localized peaks. We show that, under mild geometric conditions, this system is semi-conjugated to the full shift on $n$ symbols and, hence, carries positive topological entropy.}


\section{Introduction and results}

Consider a magnetic field in $\bR^3$ whose field lines are perpendicular to the plane $\bR^2\times \{0\}\cong \bR^2$. Then the motion of a particle of unit mass and unit charge in that plane is modelled by Newton's Second Law
\begin{equation} \label{eq:newton}
	\ddot{q} = B(q)J\dot{q}
\end{equation}
where $B: \bR^2\to\bR$ describes the field strength and the term on the right hand side is the Lorentz force corresponding to the magnetic field, with $J$ being the symplectic matrix 
$\bsm 0 & 1\\ -1 & 0\esm$. 
The differential equation \eqref{eq:newton} can be written as the Hamiltonian system 
generated by the Hamiltonian 
$H: T^* \bR^2 \to \bR, H(q,p) = \frac{1}{2} \norm{p}^2$ on $(T^* \bR^2, \omega)$ 
with the twisted symplectic form $$ \omega = \omega_0 + B(q) dq_1 \wedge dq_2 $$ 
were $\omega_0 = d\lambda = dp_1 \wedge dq_1 + dp_2 \wedge dq_2$ stands for the 
standard symplectic form on $T^* \bR^2$.

In this paper, we study the dynamics of a particle when the magnetic consists of $n\geq2$ 
localized peaks, i.e., when the support of $B$ consists of $n$ connected components 
$\supp B_k$. Assuming that each component is a disc where the magnetic field is rotationally 
symmetric, we can show that this dynamical systems exhibits chaotic behavior in the sense 
that it possesses an invariant set on which the Poincar\'e map induced by its flow is 
semi-conjugated to the full shift in $n$ symbols. This implies that there are solutions visiting 
the different components $\supp B_k$ in any prescribed order; see 
Fig.~\ref{fig:PoincareIndication}.

Moreover, we can conclude that for all sufficiently small energies $E>0$ our system has topological entropy $h_{top} \ge c \sqrt{E} > 0$.

\begin{figure}[ht]%
\centering
\includegraphics[width=0.4\columnwidth]{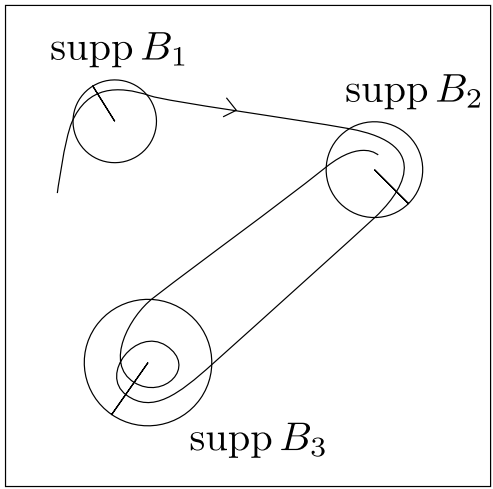}%
\caption{Symbolic Dynamics: Trajectory realising the prescribed order $..,1,2,3,3,2,..$}%
\label{fig:PoincareIndication}%
\end{figure}

There is a vast literature on similar results for the motion of a particle in a multi-bump
potential where the Hamiltonian is given by $H(q,p)=\frac{1}{2} \norm{p}^2 + V(q)$, 
and $\omega=\omega_0$. Classical scattering by potentials is treated in \cite{dg}. 
References on chaotic scattering can be found, \emph{e.g.}, in \cite{Sm,RR}.
Scattering by obstacles  is discussed in Sect.\ 5 of \cite{Ga}. 
Many of these works (including the case \cite{kn2} of molecular scattering) contain an
analysis of the bounded orbits using symbolic dynamics. In the abovementioned
cases the base for such an analysis is that scattering by a single center is described by a 
nontrivial topological index \cite{kn1,KK}.

Recent progress on motion in magnetic fields includes the following results.
\begin{itemize}
\item 
It is known that the horocycle flow on the upper half plane can be interpreted as
a magnetic geodesic flow. Dividing out co-compact subgroups of ${\rm PSL}(2,\bR)$, this gives rise
to magnetic geodesic flows on compact surfaces without periodic orbits.  
However both the metric and the magnetic field where shown in \cite{Pa} 
to be rigid in this case.
\item 
This was used in \cite{Gi} to provide counterexamples to the Hamiltonian 
Seifert conjecture for $\bR^{2d}$, $d\ge 3$ (that is, constructing 
a smooth proper Hamiltonian exhibiting a regular level set without periodic orbits).
\item 
In \cite{Gr} bounds for entropies of the magnetic geodesic flow on negatively curved manifolds were obtained, and topological entropy was shown to be strictly larger than the Liouville metric entropy except
in the constant curvature and zero field case. 

For magnetic geodesic flows on general closed surfaces criteria for positive topological entropy
were derived in  \cite{Mi}. Relationships between the growth rates of magnetic geodesics
and the topological entropy can be found in \cite{Ni}.
\item 
In \cite{PS} the magnetic geodesic flow was compared with the geodesic flow for
energies above the Ma\~n\'e critical value. Ma\~n\'e's action potential was found to coincide 
with the Riemannian length.
\end{itemize}

Surprisingly, the study of the dynamics of multi-bump magnetic fields has, to the best of our knowledge, not been undertaken yet. Note that, in contrast to the case of a singular potential, the magnetic Hamiltonian flow is complete. Indeed, since the energy $E = H$ is constant along the trajectories, we have $$ \norm{p(t,x_0)} = \sqrt{2E} $$ and 
$$ \norm{q(t,x_0)} \;=\; 
\norm{q(0,x_0) + \textstyle\int_0^t p(t,x_0)} \;\le \; \norm{q(0,x_0)} + |t| \sqrt{2E} $$ 
for all $t \in \bR$. 
Moreover, the energy surfaces $\Sigma_E := H^{-1}(E)$ with $E>0$ are all diffeomorphic to $\bR^2 \times S^1$, so the embedding of symbolic dynamics will not be based on the nontrivial topology of the energy surface but needs to be established by other methods.

The paper is organized as follows. In Section~\ref{Sect:Symm}, we will first study the dynamics generated by one component, i.e., the dynamics in a rotationally symmetric magnetic field. The main results will be then stated and proven in Section~\ref{Sect:SymbDyn}. Finally, in Section~\ref{Sect:Conclusion}, we make some further remarks and point out open problems we will attack in the future.

\emph{Acknowledgements:} The second author acknowledges financial support by the German National Academic Foundation.


\section{Dynamics in a rotationally symmetric magnetic field} \label{Sect:Symm}

First of all, it is necessary to understand the dynamics generated by one of the components. Therefore we consider the case of a compactly supported, rotationally symmetric magnetic field $B: \bR^2 \to \bR$. For the sake of simplicity, we will denote the induced function $\norm{q}\mapsto B(q)$ by the same letter $B$, and define the radius $R:=\sup\supp B>0$.

Due to the rotational symmetry of $B$, Noether's theorem implies the existence of a second constant of motion besides the kinetic energy. In order to find an explicit formula for it, we remark that \eqref{eq:newton} can be written as a Hamiltonian system in an alternative way. Since $\omega - \omega_0 = B(q) dq_1 \wedge dq_2 = B(r)r dr\wedge d\phi = d \alpha$ is exact with $\alpha = \big( -\int_r^R B(r)r \intd{r}\big) d\phi$, \eqref{eq:newton} is generated by the Hamiltonian $H(q,p) = \frac{1}{2} \norm{p - A(q)}^2$ with the magnetic vector potential $$ A(q) := \Big( \frac{1}{\norm{q}^2} \int_{\norm{q}}^R B(r)r \intd{r} \Big) Jq , $$ but now taken with respect to the standard symplectic form $\omega_0$. The corresponding Lagrangian is given by $$ L(q,\dot q) = \eh \norm{\dot q}^2 + \sprod{A(q)}{\dot{q}}. $$

\begin{prop} \label{integralCartesianCoords}
The quantity $I := \sprod{\dot q + A(q)}{-Jq}$ is an integral of motion. In fact, the system \eqref{eq:newton} with a rotationally symmetric magnetic field $B=B(\|q\|)$ is completely integrable.
\end{prop}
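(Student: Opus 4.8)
The idea is to recognise $I$ as an angular momentum in the canonical picture and to read off its conservation from the rotational symmetry. In the coordinates $(q,p)$ with $p=\dot q+A(q)$ — in which \eqref{eq:newton} is Hamiltonian for $H(q,p)=\eh\norm{p-A(q)}^2$ with respect to $\omega_0$ — one simply has $I=\Sprod{p}{-Jq}=q_1p_2-q_2p_1$, the standard angular momentum (equivalently, up to sign, $I$ is the Noether momentum of the rotation-invariant Lagrangian $L(q,\dot q)=\eh\norm{\dot q}^2+\Sprod{A(q)}{\dot q}$). So the first step is to record that $A$ is equivariant under the rotation group $SO(2)$: since $A(q)=g(\norm{q})\,Jq$ with the scalar function $g(r)=r^{-2}\int_r^R B(s)s\intd s$, and every rotation $R_\theta=e^{\theta J}$ commutes with $J$, we get $A(R_\theta q)=R_\theta A(q)$; as $R_\theta$ is also orthogonal, it follows that $H$ is invariant under the cotangent lift $(q,p)\mapsto(R_\theta q,R_\theta p)$ of the rotation action on $\bR^2$.

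Consequently $I$ is, up to sign, the moment map $\Sprod{p}{Jq}$ of this lifted Hamiltonian $SO(2)$-action, and Noether's theorem yields \emph{simultaneously} that $I$ is an integral of motion and that $\{H,I\}=0$ (the latter being just $\ddt I$ along the $H$-flow). If one prefers an explicit verification: using $\Sprod{A(q)}{-Jq}=-g(\norm{q})\norm{q}^2=-\int_{\norm{q}}^R B(r)r\intd r$ one rewrites $I=(q_1\dot q_2-q_2\dot q_1)-\int_{\norm{q}}^R B(r)r\intd r$ (a quantity which, unlike $A$ itself, is smooth on all of $\bR^2$), and then the two contributions to $\ddt I$, namely $\ddt(q_1\dot q_2-q_2\dot q_1)=q_1\ddot q_2-q_2\ddot q_1=-B(\norm{q})(q_1\dot q_1+q_2\dot q_2)$ via $\ddot q=B(q)J\dot q$, and $-\ddt\int_{\norm{q}}^R B(r)r\intd r=B(\norm{q})(q_1\dot q_1+q_2\dot q_2)$, cancel.

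For the complete integrability it then remains to check that $H$ and $I$ — which Poisson-commute by the above — are functionally independent on a dense open subset of the four-dimensional phase space. A short computation in canonical coordinates shows that $dH=\lambda\,dI$ forces $p-A(q)=-\lambda Jq$, hence $\Sprod{p-A(q)}{q}=0$; so $dH$ and $dI$ are linearly independent away from the closed, nowhere-dense set of states at which the radial distance $\norm{q}$ is momentarily stationary, and on its complement — a dense open set meeting every energy surface $\Sigma_E$ with $E>0$ — we have two independent integrals in involution, i.e.\ the Liouville--Arnold condition is met.

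None of this is hard. The points needing care are the sign conventions relating the Lorentz force law, the Lagrangian and the factor $-J$ in $I$, and — when the total flux $\int_0^R B(r)r\intd r$ is nonzero — the fact that $A$ is then only defined on $\bR^2\setminus\{0\}$, even though $I$ and $H$ extend smoothly across the origin. The single genuinely substantive input is the $SO(2)$-equivariance of $A$; everything else is routine.
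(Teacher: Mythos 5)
Your proposal is correct and follows essentially the same route as the paper: both hinge on the $SO(2)$-equivariance $A(T_sq)=T_sA(q)$ of the chosen vector potential and read off $I$ as the Noether momentum of the rotation-invariant Lagrangian $L(q,\dot q)=\eh\norm{\dot q}^2+\sprod{A(q)}{\dot q}$. Your additions --- the explicit cancellation in $\ddt I$ and, for complete integrability, the verification that $dH$ and $dI$ are independent off the nowhere-dense set where $\sprod{q}{\dot q}=0$ --- go slightly beyond the paper, which merely asserts that the involution of $H$ and $I$ is a straightforward calculation.
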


We call $I$ the \emph{magnetic momentum}.

\begin{proof}
We apply Noether's theorem (see, {\em e.g.}, Abraham and Marsden \cite{am})
to the one-parameter family 
$$ h^s: \bR^2 \to \bR^2,\ h^s(q) = \begin{pmatrix} \cos s & -\sin s\\ \sin s & \cos s \end{pmatrix} q =: T_sq . $$
For a solution $q = q(t)$ we have $\ddt h^s(q) = T_s \dot q$ and hence 
$$\textstyle L\big(h^s(q), \ddt h^s(q)\big) = 
\eh \norm{T_s \dot q}^2 + \sprod{A(T_s q)}{T_s\dot q}. $$ 
Since our choice of the magnetic potential $A$ satisfies $A(T_s q) = T_s A(q)$, we obtain 
\[L\big(h^s(q), \textstyle\ddt h^s(q)\big) \; =\;  \eh \norm{\dot q}^2 + \sprod{A(q)}{\dot q} 
\; = \; L(q, \dot q).\] 
Now Noether's theorem yields that 
$$ I \; = \; \textstyle  \sprod{\dd{L}{\dot{q}}(q,\dot q)}{\left.\dds\right|_{s=0} h^s(q)} 
\; = \; \sprod{\dot q + A(q)}{-Jq} $$ is an integral of motion.

The energy is a second integral of motion. It is a straightforward calculation to show that both integrals are in involution, hence our 4-dimensional Hamiltonian system is completely integrable.
\end{proof}

Now consider a solution $q(t)$ of \eqref{eq:newton} entering and then leaving the support 
of the magnetic field $B$. At the unique entering and exiting points $q^\pm:= q(t^\pm)$ we have 
$\norm{q^{\pm}}=R$, hence $A(q^{\pm})=0$. 
In view of Prop.~\ref{integralCartesianCoords}, this yields the following result.

\begin{cor} \label{enteringExitingAngleEqual}
For an orbit entering and exiting $\supp B$, the entering and exiting angles 
$\arccos \big(\langle \dot q(t^{\pm}), Jq^{\pm}\rangle/(\sqrt{2E}R)\big)$ coincide.
\end{cor}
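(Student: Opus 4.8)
The plan is to evaluate the magnetic momentum $I$ from Prop.~\ref{integralCartesianCoords} at the two boundary times $t^-$ and $t^+$ and to use that it is conserved. First I would record that in the alternative Hamiltonian description $H(q,p)=\eh\norm{p-A(q)}^2$ the velocity is $\dot q = p - A(q)$, so conservation of energy gives $\norm{\dot q(t)} = \sqrt{2E}$ for all $t$; in particular $\norm{\dot q(t^\pm)} = \sqrt{2E}$. Next, since $\norm{q^\pm}=R$ and $A$ is built from the radial integral $\int_{\norm q}^R B(r)\,r\intd r$, which vanishes at $\norm q = R$, we have $A(q^\pm)=0$. Hence $I = \sprod{\dot q(t^\pm)+A(q^\pm)}{-Jq^\pm} = \sprod{\dot q(t^\pm)}{-Jq^\pm}$ at \emph{both} boundary times.

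Because $I$ is an integral of motion, this yields $\sprod{\dot q(t^+)}{-Jq^+} = \sprod{\dot q(t^-)}{-Jq^-}$, equivalently $\sprod{\dot q(t^+)}{Jq^+} = \sprod{\dot q(t^-)}{Jq^-}$. Dividing by $\sqrt{2E}\,R = \norm{\dot q(t^\pm)}\cdot\norm{Jq^\pm}$ shows that the two arguments of the $\arccos$ coincide, which is exactly the assertion; the angles themselves agree since $\arccos$ is injective on $[-1,1]$.

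There is no genuine obstacle in this argument — it is essentially just the observation made in the text just before the statement, namely that the conserved quantity $I$ reduces on $\partial\supp B = \{\norm q = R\}$ to $\sprod{\dot q}{-Jq}$. The only points deserving a word of care are those already recalled: that an orbit meeting $\supp B$ has well-defined, unique entering and exiting points $q^\pm = q(t^\pm)$ with $\norm{q^\pm}=R$, and that this forces $A(q^\pm)=0$. After that, the corollary follows immediately from the conservation of $I$ together with $\norm{\dot q}\equiv\sqrt{2E}$.
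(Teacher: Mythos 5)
Your proposal is correct and follows exactly the paper's (implicit) argument: the remark preceding the corollary notes that $\norm{q^\pm}=R$ forces $A(q^\pm)=0$, so the conserved momentum $I$ reduces to $\sprod{\dot q(t^\pm)}{-Jq^\pm}$ at both boundary times, and dividing by $\sqrt{2E}\,R$ gives equality of the cosines and hence of the angles. Nothing is missing.
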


Note that in polar coordinates $q = (r \cos \theta, r \sin \theta)$ the Hamiltonian satisfies $H = \frac{1}{2}({\dot r}^2 + r^2{\dot\theta}^2)$. Therefore we obtain, for some fixed energy $E >0$, the expression
\begin{equation} \label{eq:integralPolarCoords}
I = r^2\dot\theta - \int_r^R B(r) r \intd{r} = \pm r\sqrt{2E-{\dot r}^2} - \int_r^R B(r) r \intd{r} ,
\end{equation}
showing that $I$ does not depend on $\theta$.

In a constant magnetic field the trajectories are circles of a fixed radius, the Larmor radius. 
The question arises whether there are still circular orbits in a rotationally symmetric magnetic 
field. The first observation is that the curvature of a solution $q$ of 
\eqref{eq:newton} at $t$ equals $\frac{-B(q(t))}{\sqrt{2E}}$. 
Hence, the existence of a circular orbit in $\Sigma_E= H^{-1}(E)$ of radius $r$ 
w.r.t.\ the origin of the configuration plane is equivalent 
to $r$ satisfying the equation 
$$ \frac{|B(r)|}{\sqrt{2E}} = \frac{1}{r} . $$ 
This yields the following result.

\begin{lem} \label{exCircOrbit}
For every energy $E \in (0, E^\circ)$ with 
$$ E^\circ := \max_{r \ge 0} \frac{\big(B(r) r\big)^2}{2} $$ 
there are at least two circular orbits; for $E = E^\circ$ there is at least one circular orbit.
\end{lem}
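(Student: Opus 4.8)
The plan is to reduce the statement to a one–variable counting problem and then apply the intermediate value theorem. By the observation preceding the lemma, a circle of radius $r>0$ about the origin carries a solution in $\Sigma_E$ precisely when $|B(r)|/\sqrt{2E}=1/r$, i.e.\ when $\psi(r):=\tfrac12\big(B(r)r\big)^2=E$. Hence the number of such circular orbits at energy $E$ equals the number of positive solutions of $\psi(r)=E$, and it suffices to count those.

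First I would record the elementary features of $\psi$. Since $B$ is continuous (indeed smooth) and supported in the closed disc of radius $R$, the function $\psi\colon[0,\infty)\to[0,\infty)$ is continuous with $\psi(0)=0$, and $\psi(r)=0$ for all $r\ge R$ — using that $B(r)=0$ for $r>R$ and hence $B(R)=0$ by continuity, because $R=\sup\supp B$. Being continuous and compactly supported, $\psi$ attains its maximum $E^\circ=\max_{r\ge0}\psi(r)$ at some point $r^*$; and since $\psi$ vanishes at $0$ and on $[R,\infty)$ while $E^\circ>0$ (because $B\not\equiv 0$, as $R>0$), we must have $r^*\in(0,R)$.

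Now fix $E\in(0,E^\circ)$. On $[0,r^*]$ we have $\psi(0)=0<E<E^\circ=\psi(r^*)$, so the intermediate value theorem gives some $r_1\in(0,r^*)$ with $\psi(r_1)=E$; on $[r^*,R]$ we have $\psi(r^*)=E^\circ>E>0=\psi(R)$, so there is $r_2\in(r^*,R)$ with $\psi(r_2)=E$. Since $r_1<r^*<r_2$ the two radii differ, the associated circles about the origin are distinct, and we have produced at least two circular orbits. For $E=E^\circ$ the value $r=r^*$ already satisfies $\psi(r^*)=E^\circ$, yielding at least one.

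There is no real obstacle here; the only point needing a little care is to guarantee that the maximum $E^\circ$ of $\psi$ is attained at an \emph{interior} point of $(0,R)$, so that the intermediate value theorem can be applied separately on each side of $r^*$ — this is exactly where continuity of $B$ together with $B(R)=0$ is used. (One could say more: every regular value of $\psi$ that is crossed transversally below $E^\circ$ produces a further pair of circular orbits, but the statement only claims existence of at least two.)
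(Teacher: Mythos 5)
Your proof is correct and follows exactly the route the paper intends: the lemma is stated right after the equivalence of a circular orbit of radius $r$ with $|B(r)|r=\sqrt{2E}$, and the paper leaves the intermediate value theorem argument implicit, which you have filled in carefully (including the point that the maximum of $r\mapsto\tfrac12(B(r)r)^2$ is attained in the interior of $(0,R)$ because $B$ is continuous with $B(R)=0$).
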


For the following considerations we fix an energy $E \in (0, E^\circ]$. If we denote by $R^+ = R^+(E)$ the largest radius of the circular orbits having energy $E$, then $$ \frac{|B(r)|}{\sqrt{2E}} \begin{cases} = \frac{1}{r} & \text{for } r = R^+ \\ < \frac{1}{r} & \text{for } r > R^+ \end{cases} . $$

This outermost circular orbit plays an important role for the dynamics. Outside the disc of radius $R^+$ the magnetic field is too weak to capture orbits and prevent them from escaping to infinity. This is expressed by the following result.

\begin{prop} \label{VBed-Allg}
Let $E \in (0, E^\circ]$ and $x_0 = (q_0,p_0) \in \Sigma_E$ with $\norm{q_0} > R^+$ and $\sprod{q_0}{p_0} \ge 0$. Then there is a $\delta > 0$ such that $$ \norm{q(t)}^2 \ge \norm{q_0}^2 + \delta t^2 $$ for all $t \ge 0$.
\end{prop}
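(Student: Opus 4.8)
The plan is to show that once the particle is outside the disc of radius $R^+$ and moving outward (or tangentially), the radial coordinate is convex in $t$ with a definite lower bound on its second derivative, so that $\norm{q(t)}^2$ grows at least quadratically. Write $\rho(t) := \norm{q(t)}^2 = \sprod{q(t)}{q(t)}$. Then $\dot\rho = 2\sprod{q}{\dot q}$ and, using Newton's equation \eqref{eq:newton} together with the antisymmetry of $J$,
$$
\ddot\rho \;=\; 2\norm{\dot q}^2 + 2\sprod{q}{\ddot q} \;=\; 2\norm{\dot q}^2 + 2 B(q)\sprod{q}{J\dot q} \;=\; 4E + 2B(q)\sprod{q}{J\dot q},
$$
since $\norm{\dot q}^2 = \norm p^2 = 2E$ on $\Sigma_E$. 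The first term is a constant, strictly positive; the whole point is to control the sign and size of the magnetic term $2B(q)\sprod{q}{J\dot q}$ when $\norm q > R^+$.

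First I would dispose of the region $\norm{q(t)} \ge R$: there $B$ vanishes, so $\ddot\rho = 4E$ exactly, and together with $\dot\rho(0)\ge 0$ (that is the hypothesis $\sprod{q_0}{p_0}\ge 0$) one gets $\rho(t) \ge \rho(0) + 2E\,t^2$ directly. The work is in the annulus $R^+ < \norm q < R$, where $B$ is nonzero. Here I would use the magnetic momentum. By Corollary~\ref{enteringExitingAngleEqual} and \eqref{eq:integralPolarCoords}, the value of $I$ along the orbit is fixed, and the geometric meaning is that $\big|\sprod{\dot q}{Jq}\big| = \big|r^2\dot\theta\big|$ is exactly $\big|I + \int_r^R B(r)r\,dr\big|$. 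The key estimate is that for $\norm q = r > R^+$ one has the pointwise bound
$$
\Big|\,2B(q)\sprod{q}{J\dot q}\,\Big| \;\le\; \frac{2|B(r)|}{r}\,\big|\sprod{\dot q}{Jq}\big|\cdot r \;<\; \frac{2\sqrt{2E}}{r}\,\big|\sprod{\dot q}{Jq}\big|,
$$
using the defining inequality $|B(r)|/\sqrt{2E} < 1/r$ for $r > R^+$ from the discussion preceding Lemma~\ref{exCircOrbit}; and since $\big|\sprod{\dot q}{Jq}\big| \le \norm{\dot q}\,\norm q = \sqrt{2E}\,r$, the magnetic term is strictly smaller than $4E$ in absolute value. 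Hence $\ddot\rho = 4E + (\text{something} > -4E) > 0$ as long as the particle stays in the annulus, so $\rho$ is strictly convex there. The slightly delicate point is to upgrade this strict positivity to a \emph{uniform} lower bound $\ddot\rho \ge 2\delta$: I would take $\delta$ as a fixed fraction of the minimum, over the compact set $\{R^+ \le r \le R\}$, of $2E - \sqrt{2E}\,|B(r)|r$, which is strictly positive by the strict inequality on that compact set. Then, since $\dot\rho(0) \ge 0$ and $\dot\rho$ cannot decrease while $\rho$ is convex, the particle never turns back inward: once outside $R^+$ with $\sprod{q_0}{p_0}\ge 0$ it stays outside, $\ddot\rho \ge 2\delta$ for all $t \ge 0$, and integrating twice gives $\rho(t) \ge \rho(0) + \delta t^2$.

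The main obstacle I anticipate is the bookkeeping needed to make the convexity argument genuinely global in $t$: a priori the orbit could oscillate between the annulus $R^+ < r < R$ and the field-free exterior $r \ge R$, and one must argue that $\dot\rho$ stays $\ge 0$ across all such transitions — which follows because $\ddot\rho > 0$ throughout the closure of the region $\{r \ge R^+\}$, so $\dot\rho$ is monotonically nondecreasing along the whole forward orbit as long as it remains there, and hence $\rho$ itself is nondecreasing and the orbit can never re-enter $\{r \le R^+\}$. A secondary point is choosing $\delta$ uniformly; this is where compactness of $\{R^+ \le \norm q \le R\}$ and the strict inequality $|B(r)| r < \sqrt{2E}$ there (which also covers the endpoint $r = R$, where $B = 0$) are used. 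Everything else is the two displayed differentiations of $\rho$ and a double integration.
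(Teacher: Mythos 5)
Your overall strategy is the same as the paper's: differentiate $\norm{q(t)}^2$ twice, bound the magnetic term by $\sqrt{2E}\,|B(r)|\,r$, and use the defining inequality of $R^+$ together with $\sprod{q_0}{p_0}\ge 0$ to propagate convexity forward in time. But your specific choice of $\delta$ fails. You take $\delta$ to be a fixed fraction of $\min_{R^+\le r\le R}\bigl(2E-\sqrt{2E}\,|B(r)|\,r\bigr)$ and assert this is strictly positive ``by the strict inequality on that compact set''. It is not: at the endpoint $r=R^+$ the defining equation of the outermost circular orbit gives $|B(R^+)|\,R^+=\sqrt{2E}$, so the quantity you are minimizing equals $0$ there, and the minimum over $[R^+,R]$ is exactly $0$. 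The strict inequality $|B(r)|\,r<\sqrt{2E}$ holds only for $r>R^+$, and its infimum over the half-open interval $(R^+,R]$ is still $0$ by continuity of $B$. Concretely, on the circular orbit of radius $R^+$ one has $\ddot\rho\equiv 0$, so no uniform positive lower bound on $\ddot\rho$ can hold on any set whose closure meets $\{r=R^+\}$; a $\delta$ depending only on $E$ and $B$, uniform over the whole annulus, cannot exist.

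The repair is exactly what the paper does, and it is the reason the hypothesis reads $\norm{q_0}>R^+$ with strict inequality: take the extremum over $\{\norm{q}\ge\norm{q_0}\}$ (equivalently over the compact annulus $\norm{q_0}\le r\le R$, since $B$ vanishes for $r\ge R$), which is bounded away from $r=R^+$ and hence yields a genuine $\delta>0$ --- depending on $\norm{q_0}$, which the statement permits. The bootstrap is then: as long as $\norm{q(t)}\ge\norm{q_0}$ one has $\ddot\rho\ge 2\delta$; since $\dot\rho(0)=2\sprod{q_0}{p_0}\ge0$, the radius can never drop below $\norm{q_0}$, so the estimate holds for all $t\ge0$ and two integrations finish the proof. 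Your separate discussion of oscillation between the annulus and the field-free exterior $r\ge R$ is then unnecessary; the single estimate on $\{\norm{q}\ge\norm{q_0}\}$ covers both regions at once.
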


\begin{proof}
On $\{ \norm{q} \ge \norm{q_0}\}$ the function $\frac{|B(q)|}{\sqrt{2E}}\norm{q} < 1$ attains its maximum with maximal value, say, $1 - \frac{\delta}{2E}$. We consider the function $f(t) := \frac{1}{2} \norm{q(t)}^2$ with derivatives
\[
f'(t) = \sprod{q(t)}{p(t)}
\]
and
\[
f''(t) \; =  \; 2E + \sprod{q(t)}{B\big(q(t)\big)Jp(t)}  \; \ge \;  2E - \norm{q(t)}\, |B(q(t))|\, \sqrt{2E}.
\]
As long as $\norm{q(t)} \ge \norm{q_0}$ the second derivative satisfies the inequality
\[
f''(t) \ge 2E - \sqrt{2E}\Big(1 - \frac{\delta}{2E}\Big)\sqrt{2E} = \delta.
\]
Since $f'(0) \ge 0$ this holds for any $t \ge 0$ and hence the claim follows.
\end{proof}

\begin{rem}
The calculation in the proof shows that $t\mapsto \norm{q(t)}^2$ is convex while 
$q(t)$ is outside the disc $\{ \norm{q} \le R^+ \}$. Hence, for an orbit staying outside the disc of radius $R^+$, $\norm{q(t)}$ either attains its minimum or converges to its infimum as $t \to \infty$.
\end{rem}

\begin{rem}
The sign of $B(R^+)\neq 0$ determines the orientation of the outermost circular orbit. The orbit winds clockwise around the origin if $B(R^+)>0$, and counterclockwise if $B(R^+)<0$.
\end{rem}

For further investigations of the dynamics, especially of the orbits staying outside the $R^+$-disc, we make use of the integral and define $\Itr$ to be the value of $I$ on the circular orbit of radius $R^+$. For now we assume $B(R^+) > 0$ and refer to Rem.~\ref{integralRThetaGeneral} for the case $B(R^+) < 0$. We then have
\begin{equation} \label{eq:DefTrMom-}
\Itr = -R^+ \sqrt{2E} - \int_{R^+}^R B(r) r \intd{r}
\end{equation}
and call this quantity the \emph{critical magnetic momentum}.

In order to understand the motion of orbits entering the support of $B$, we consider the set of points
\begin{equation} \label{eq:UE1}
 U_E = \big\{ (q,p) \in \Sigma_E \sd \norm{q} = R, \sprod{q}{p} \le 0 \big\} 
 \end{equation}
through which orbits in $q$-space enter the ball of radius $R$. By taking the cosine of the angle between $q$ and $Jp$ we have $U_E \cong S^1 \times [-1,1]$. Since $\sprod{q}{Jp} = r^2 \dot\theta$ we have a correspondence between $\dot\theta \in [- \frac{\sqrt{2E}}{R},\frac{\sqrt{2E}}{R}]$ and the entering direction in $[-1,1]$. Thus, orbits entering the support of $B$ have, in view of \eqref{eq:integralPolarCoords}, magnetic momentum $I = R^2 \dot\theta \in [-R \sqrt{2E}, R \sqrt{2E}]$.

\begin{lem} \label{qGeR}
If $x=(q,p) \in U_E$ has magnetic momentum $I\leq \Itr$ then 
$\norm{q(t,x)} \ge R^+$ for all $t \in \bR$.
\end{lem}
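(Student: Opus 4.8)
The plan is to reduce the statement to a one--dimensional problem for the radius $r(t) := \norm{q(t,x)}$, exploiting the magnetic momentum $I$ as a second integral. I would write $\Phi(r) := \int_r^R B(\rho)\,\rho\intd\rho$, so that $\Phi \equiv 0$ for $r \ge R$ and $\Phi'(r) = -B(r)\,r$. Along a solution, equation \eqref{eq:integralPolarCoords} reads $r^2\dot\theta = I + \Phi(r)$; substituting this into the energy relation $\eh(\dot r^2 + r^2\dot\theta^2) = E$ and multiplying by $r^2$ gives
\[
 r(t)^2\,\dot r(t)^2 \;=\; 2E\,r(t)^2 - \big(I + \Phi(r(t))\big)^2 \;=:\; W\big(r(t)\big) \;\ge\; 0 \qquad (t \in \bR).
\]
In particular the radius is confined to $\{\, r > 0 \sd W(r) \ge 0\,\}$, and, differentiating in $t$ and using $\dot r^2 = W(r)/r^2$ once more, $r$ solves the autonomous second--order equation $\ddot r = \eh\,\tfrac{d}{dr}\big(W(r)/r^2\big)$, in which $I$ enters only as a parameter.

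Next I would evaluate $W$ at the outermost circular radius $R^+$. By \eqref{eq:DefTrMom-} we have $\Itr + \Phi(R^+) = -R^+\sqrt{2E}$, so
\[
 W(R^+) \;=\; 2E\,(R^+)^2 - \big(I + \Phi(R^+)\big)^2 ,
\]
and the hypothesis $I \le \Itr$ forces $I + \Phi(R^+) \le -R^+\sqrt{2E} < 0$; hence $W(R^+) \le 0$, \emph{with equality precisely when $I = \Itr$}. Moreover, by Lemma~\ref{exCircOrbit} there is a circular orbit of radius $R^+$, and by definition its magnetic momentum equals $\Itr$; for the parameter value $I = \Itr$ it is the constant solution $r \equiv R^+$ of the radial equation, so $(r,\dot r) = (R^+, 0)$ is an equilibrium of that equation.

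Finally I would argue by contradiction. Since $x \in U_E$ the initial radius is $r(0) = R \ge R^+$. Assume $\norm{q(t_1,x)} < R^+$ for some $t_1 \in \bR$. By the intermediate value theorem there is a $t_2$ between $0$ and $t_1$ with $r(t_2) = R^+$, and then $W(R^+) = r(t_2)^2\,\dot r(t_2)^2 \ge 0$. By the previous paragraph this forces $W(R^+) = 0$, hence $I = \Itr$ and $\dot r(t_2) = 0$; but then $(r(t_2),\dot r(t_2)) = (R^+, 0)$ is the equilibrium of the (locally Lipschitz) radial equation, so uniqueness of solutions yields $r \equiv R^+$, contradicting $r(t_1) < R^+$. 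Hence $\norm{q(t,x)} \ge R^+$ for all $t$. The hard part will be precisely this borderline value $I = \Itr$: there $W$ merely touches zero at $R^+$ without changing sign, so confinement to $\{W \ge 0\}$ does not by itself rule out a passage through $R^+$, and one genuinely needs the rigidity of the circular orbit (uniqueness for the radial equation, equivalently for the Hamiltonian flow). The degenerate subcase $R^+ = R$ causes no trouble: then $x \in U_E$ already forces $I = \Itr$, and the orbit under consideration is the circular one.
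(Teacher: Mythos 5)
Your proof is correct and rests on the same mechanism as the paper's: the identity $r^2\dot r^2 = 2Er^2-\bigl(I+\Phi(r)\bigr)^2$ evaluated at $r=R^+$ (which is exactly the paper's inequality chain $I\ge -R^+\sqrt{2E-\dot r^2}-\int_{R^+}^R B(r)r\intd r$ in effective-potential form), combined with uniqueness of solutions to exclude the borderline circular orbit. The only difference is presentational: you isolate the case $I=\Itr$ and justify the uniqueness step via the equilibrium $(R^+,0)$ of the reduced radial equation, which is somewhat more explicit than the paper's terse ``by uniqueness of the solution, $\dot r\neq 0$'', but it is not a different argument.
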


\begin{proof}
Outside $\supp B$ the motion coincides with the free motion, hence the statement $\norm{q(t,x)} \ge R \ge R^+$ holds for all $t \le 0$. Assume a trajectory intersects the circle of radius $R^+$. Then at the intersection point we have $r = R^+$ and, by uniqueness of the solution, $\dot r \neq 0$. This implies 
\begin{align*}
I &= (R^+)^2\dot \theta - \int_{R^+}^R B(r)r \intd{r} \ge -R^+ \sqrt{2E - {\dot r}^2} - \int_{R^+}^R B(r)r \intd{r} \\
 &> -R^+ \sqrt{2E} - \int_{R^+}^R B(r)r \intd{r} = \Itr,
\end{align*}
contradicting our assumption.
\end{proof}

\begin{lem}\label{thetaLeZero}
Let $x=(q,p) \in \Sigma_E$ with $\norm{q} \ge R^+$ and $I \le \Itr$. 
Then, for any $\rho \ge \norm{q}$, there is a constant $c_{\rho} < 0$ such that 
$\dot \theta \le c_{\rho}$ as long as $\norm{q(t,x)} \le \rho$.
\end{lem}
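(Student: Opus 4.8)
The plan is to read the estimate straight off the polar‑coordinate expression \eqref{eq:integralPolarCoords} for the magnetic momentum. Write $r(t) := \norm{q(t,x)}$ and abbreviate $\Phi(r) := \int_r^R B(\sigma)\sigma\intd\sigma$, so that \eqref{eq:integralPolarCoords} becomes
\[
\dot\theta(t) \;=\; \frac{1}{r(t)^2}\Big(I + \Phi\big(r(t)\big)\Big).
\]
The whole assertion is thus a statement about the numerator $I + \Phi(r)$: I want to bound it above by a fixed negative constant for $r \in [R^+,\rho]$ (which is the relevant range, since $\rho \ge \norm{q} = r(0) \ge R^+$) and then divide by $r^2 \le \rho^2$.

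First I would note that $r(t) \ge R^+$ for all $t$, by the argument already used in the proof of Lemma~\ref{qGeR}. Indeed, if the orbit met the circle $\{r = R^+\}$ at some time with $\dot r \neq 0$ there, then $(R^+\dot\theta)^2 = 2E - \dot r^2 < 2E$ would give $I = (R^+)^2\dot\theta - \Phi(R^+) > -R^+\sqrt{2E} - \Phi(R^+) = \Itr$ by \eqref{eq:DefTrMom-}, contradicting $I \le \Itr$; and a contact with $\dot r = 0$ would, by uniqueness, force the orbit to be the circular orbit of radius $R^+$, in which case $r(t) \equiv R^+$ and the estimate below goes through unchanged. So no crossing of $\{r = R^+\}$ occurs.

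Next comes the key estimate. For every $r \ge R^+$, using $I \le \Itr$ together with $\Itr + \Phi(R^+) = -R^+\sqrt{2E}$ from \eqref{eq:DefTrMom-},
\[
I + \Phi(r) \;\le\; \Itr + \Phi(r) \;=\; -R^+\sqrt{2E} + \big(\Phi(r) - \Phi(R^+)\big) \;=\; -R^+\sqrt{2E} - \int_{R^+}^{r} B(\sigma)\sigma\intd\sigma.
\]
Here the peak hypothesis enters: since $B$ does not change sign on $\supp B$ (and $B(R^+) > 0$), the last integral is $\ge 0$, hence $I + \Phi(r) \le -R^+\sqrt{2E} < 0$ for all $r \ge R^+$. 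Feeding this back, as long as $r(t) \le \rho$,
\[
\dot\theta(t) \;=\; \frac{I + \Phi(r(t))}{r(t)^2} \;\le\; \frac{-R^+\sqrt{2E}}{r(t)^2} \;\le\; \frac{-R^+\sqrt{2E}}{\rho^2} \;=:\; c_\rho \;<\; 0,
\]
and $c_\rho$ depends only on $\rho$ since $E$ and $R^+$ are fixed.

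I expect the only real subtlety to be that last monotonicity step: $I \le \Itr$ is a single scalar inequality, and the point is that it propagates to the \emph{pointwise} bound $I + \Phi(r) < 0$ on all of $[R^+,\rho]$; this uses that $\Phi$ is monotone on $[R^+,R]$, i.e.\ that $B$ is single‑signed there, and one should note that some such sign condition is genuinely needed (absent it the numerator can turn non‑negative for large $r$, and the lemma fails). The no‑crossing argument of the second step and the polar bookkeeping are otherwise routine.
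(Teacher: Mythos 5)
Your proof is correct and takes essentially the same route as the paper: substitute $I\le \Itr$ and the definition of $\Itr$ into the polar expression for $\dot\theta$, collapse the numerator to $-R^+\sqrt{2E}-\int_{R^+}^{r}B(\sigma)\sigma\,\mathrm{d}\sigma$, and divide by $r^2\le\rho^2$ to obtain the identical constant $c_\rho=-R^+\sqrt{2E}/\rho^2$. Your two side remarks --- that the orbit cannot cross $\{r=R^+\}$, and that the final inequality genuinely requires $\int_{R^+}^{r}B(\sigma)\sigma\,\mathrm{d}\sigma\ge 0$, i.e.\ a sign condition on $B$ outside the outermost circular orbit --- correspond to steps the paper performs silently, so making them explicit is a small improvement rather than a different argument.
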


\begin{proof}
The inequality
\begin{equation*}
\begin{split}
\dot\theta &= \frac{1}{\norm{q(t)}^2} \Big(I(x) + \int_{\norm{q(t)}}^{\rho} B(r)r \intd{r}\Big)\\
 &\le \frac{1}{\norm{q(t)}^2} 
 \Big(-R^+\sqrt{2E} - \int_{R^+}^{\rho} B(r)r \intd{r} + \int_{\norm{q(t)}}^{\rho} B(r)r \intd{r}\Big)\\
 &= \frac{1}{\norm{q(t)}^2} \Big(-R^+\sqrt{2E} - \int_{R^+}^{\norm{q(t)}} B(r)r \intd{r}\Big)\\
 &\le - \frac{R^+\sqrt{2E}}{\norm{q(t)}^2}\\
 &\le - \frac{R^+\sqrt{2E}}{\rho^2} =: C_{\rho}
\end{split}
\end{equation*}
holds as long as $\norm{q(t)} \le \rho$.
\end{proof}

Note that, in view of Lemma~\ref{qGeR}, points with $I< \Itr$ cannot enter the disc of radius $R^+$. Furthermore, for these trajectories we already know that $\norm{q(t)}$ assumes a global minimum or converges to its infimum for $t \to \infty$. Which behaviour occurs can now be precisely described by the magnetic momentum.

\begin{prop} \label{charByMomentum}
For each $(q,p)\in U_E$ we have
\begin{enumerate}
	\item $I = \Itr\ \Longleftrightarrow \ \lim_{t\to\infty} \norm{q(t)} = R^+$.
	\item $I < \Itr\ \Longrightarrow \ \min_{t\in\bR} \norm{q(t)}> R^+$.
\end{enumerate}
\end{prop}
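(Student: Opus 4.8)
The plan is to reduce both assertions to a study of the radial motion. Writing $a(r):=\int_r^R B(s)\,s\intd s$, equation \eqref{eq:integralPolarCoords} together with conservation of energy $\dot r^2+r^2\dot\theta^2=2E$ gives, along every orbit of energy $E$,
$$ \dot\theta \;=\; \frac{I+a(r)}{r^2}, \qquad \dot r^2 \;=\; \Phi_I(r) \;:=\; 2E-\frac{\bigl(I+a(r)\bigr)^2}{r^2}, $$
so the orbit stays in $\{\Phi_I\ge 0\}$ and at every radial turning point $\dot r=0$ one has $I=g_\pm(r)$ for one of the choices in $g_\pm(r):=\pm r\sqrt{2E}-a(r)$. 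I will use three elementary facts, all following from $g_\pm'(r)=\pm\sqrt{2E}+B(r)r$, from $|B(r)|\,r<\sqrt{2E}$ for $r>R^+$, from $B(R^+)R^+=\sqrt{2E}$ (recall $B(R^+)=\sqrt{2E}/R^+>0$), and from \eqref{eq:DefTrMom-}: (i) $g_-(R^+)=\Itr$; (ii) $g_-$ is strictly decreasing and $g_+$ strictly increasing on $[R^+,\infty)$; (iii) consequently $g_+(r)\ge g_+(R^+)>\Itr$ for every $r\ge R^+$. A short computation also gives $\Phi_{\Itr}(R^+)=\Phi_{\Itr}'(R^+)=0$, so $R^+$ is a degenerate zero of $\Phi_{\Itr}$ --- the analytic reason an orbit can reach $R^+$ only as $t\to\infty$.

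First I would isolate a core step, valid for any $x=(q,p)\in U_E$ with $I\le\Itr$. Put $m:=\inf_{t\in\bR}\norm{q(t,x)}$. By Lemma~\ref{qGeR}, $\norm{q(t)}\ge R^+$ for all $t$; the computation in the proof of Proposition~\ref{VBed-Allg} then shows $t\mapsto\norm{q(t)}^2$ has nonnegative second derivative whenever $\norm{q(t)}\ge R^+$, hence it is convex on all of $\bR$. Moreover, for $x\in U_E$ the motion is free for $t\le 0$ (as noted in the proof of Lemma~\ref{qGeR}), so $\norm{q(t)}^2=R^2+2t\sprod{q}{p}+2Et^2\to\infty$ as $t\to-\infty$. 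A convex function on $\bR$ with that property either attains its minimum at some $t_0$, or is strictly decreasing on $\bR$ with $\norm{q(t)}\downarrow m$ as $t\to\infty$. In the first case $\dot r(t_0)=0$ forces $\Phi_I(m)=0$; in the second, if $\Phi_I(m)$ were positive then $\dot r=-\sqrt{\Phi_I(r)}$ would be bounded away from $0$ near $r=m$ and $r$ would drop below $m$ in finite time, so again $\Phi_I(m)=0$. Thus $I+a(m)=\pm m\sqrt{2E}$, and fact (iii) together with $I\le\Itr$ rules out the $+$ branch, leaving $I=g_-(m)$ with $m\ge R^+$.

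Part~2 then follows immediately: if $I<\Itr=g_-(R^+)$, the equality $g_-(m)=I<g_-(R^+)$ and the strict monotonicity in (ii) force $m>R^+$. For the forward direction of Part~1, from $I=\Itr=g_-(R^+)$ and injectivity of $g_-$ on $[R^+,\infty)$ we get $m=R^+$, and I would then show that this minimum is not attained: at a minimizing $t_0$ one would have $r(t_0)=R^+$ and $\dot r(t_0)=0$, the sign in \eqref{eq:integralPolarCoords} would be forced to the $-$ branch (the $+$ one yields $g_+(R^+)>\Itr$), hence $\dot\theta(t_0)=-\sqrt{2E}/R^+$, and the radial equation $\ddot r=r\dot\theta\bigl(\dot\theta+B(r)\bigr)$ would give $\ddot r(t_0)=0$ since $B(R^+)=\sqrt{2E}/R^+$; by uniqueness of solutions the orbit would then be the circular orbit of radius $R^+<R$ (here $R^+<R$ because $B$ vanishes at $r=R$), contradicting $x\in U_E$. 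So the ``strictly decreasing'' alternative holds, i.e. $\lim_{t\to\infty}\norm{q(t)}=R^+$. For the reverse direction, assume $\norm{q(t)}\to R^+$: then $r$ is eventually bounded, so $\dot\theta(t)\to(I+a(R^+))/(R^+)^2=:\omega$ and conservation of energy forces $\dot r^2\to 2E-(R^+)^2\omega^2$, which must vanish (otherwise $\dot r$ has a fixed sign and modulus bounded below for large $t$, sending $r\to\pm\infty$); hence $\omega=\pm\sqrt{2E}/R^+$, and $\omega=+\sqrt{2E}/R^+$ is impossible because then $\ddot r=r\dot\theta(\dot\theta+B(r))\to\sqrt{2E}\bigl(\sqrt{2E}/R^++B(R^+)\bigr)>0$ would push $r\to\infty$. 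Therefore $\omega=-\sqrt{2E}/R^+$, which reads $I+a(R^+)=-R^+\sqrt{2E}$, i.e. $I=\Itr$.

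The step I expect to be the main obstacle is the ``infimum not attained'' alternative: ruling out that $\norm{q(t)}$ merely asymptotes to a value strictly between $R^+$ and $R$ (for Part~2), or approaches $R^+$ with the wrong sense of rotation (for the reverse direction of Part~1). This is exactly where the degeneracy of the zero of $\Phi_{\Itr}$ at $R^+$, the separation of the branches $g_\pm$ afforded by the orientation hypothesis $B(R^+)>0$, and the radial equation $\ddot r=r\dot\theta(\dot\theta+B)$ must be combined, and the attendant sign bookkeeping is the part most prone to error.
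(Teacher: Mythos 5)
Your turning-point framework (the functions $\Phi_I$ and $g_\pm$) is sound, and Part~1 goes through in both directions; indeed your reverse implication, via the limiting value of $\ddot r=r\dot\theta\bigl(\dot\theta+B(r)\bigr)$, is more explicit than the paper's appeal to the sign of the curvature near the circle of radius $R^+$, and the exclusion of the $+$ branch through $g_+(r)\ge g_+(R^+)>\Itr$ together with the uniqueness argument at a putative attained minimum of radius $R^+$ is correct. Methodologically you replace the paper's direct estimate $\int_{R^+}^\rho B(r)r\intd{r}<(\rho-R^+)\sqrt{2E}$ and its compactness argument by monotonicity of $g_\pm$ and the order of the zero of $\Phi_I$, which is a legitimate and somewhat more systematic route.

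The genuine gap is exactly the one you name in your closing paragraph and then never close: Part~2 asserts $\min_{t\in\bR}\norm{q(t)}>R^+$, i.e.\ that the minimum is \emph{attained} and exceeds $R^+$, whereas your dichotomy only yields $\inf_{t}\norm{q(t)}=m>R^+$, leaving open the alternative in which $\norm{q(t)}$ decreases strictly to $m$ without reaching it. This is not a pedantic point: ruling out asymptotic convergence to the infimum is the \emph{entire} content of the paper's proof of Part~2 (it extracts a limit point $(q_\infty,p_\infty)$ with $\sprod{q_\infty}{p_\infty}=0$ and invokes Prop.~\ref{VBed-Allg} together with continuity of the flow), and attainment is what later guarantees that orbits with $I<\Itr$ actually exit $\supp B$, so that the exit time $T(x)$ and exit angle $\theta^e(x)$ are defined. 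Your own framework closes the gap in one more line: in the asymptotic alternative you have $\Phi_I(m)=0$ with $I=g_-(m)$ and $m>R^+$, and then $\Phi_I'(m)=\tfrac{2\sqrt{2E}}{m}\bigl(\sqrt{2E}-B(m)m\bigr)>0$ since $|B(m)|m<\sqrt{2E}$ for $m>R^+$; a simple zero of $\Phi_I$ means $\int \frac{\intd{r}}{\sqrt{\Phi_I(r)}}$ converges at $m$, so the radius reaches $m$ in finite time, contradicting non-attainment. (This is precisely the counterpart of your observation that the zero of $\Phi_{\Itr}$ at $R^+$ is degenerate, which is why the critical orbit takes infinite time.) Either this addition or the paper's compactness argument is needed before Part~2 can be said to ``follow immediately.''
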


\begin{proof}
As for the first assertion, we prove ``$\Rightarrow$'' by contradiction. If $\norm{q(t)}$ does not converge to $R^+$ then it attains its minimum $\rho > R^+$ and, since Lemma~\ref{thetaLeZero} will assure $\dot\theta < 0$, we have 
\begin{equation*}
\begin{split}
I &= -\rho\sqrt{2E} - \int_\rho^R B(r)r \intd{r} \\
 &= -\rho\sqrt{2E} + \int_{R^+}^\rho B(r)r \intd{r} - \int_{R^+}^R B(r)r \intd{r} \\
 &< -\rho\sqrt{2E} + (\rho - R^+)\sqrt{2E} - \int_{R^+}^R B(r)r \intd{r} \\
 &= -R^+\sqrt{2E} -  \int_{R^+}^R B(r)r \intd{r} \\
 &= \Itr,
\end{split}
\end{equation*}
which contradicts the premise. The reversed implication is true since the curvature near $\{ \norm{q} = R^+ \}$ is negative and hence a trajectory with $\norm{q(t)} \to R^+$ has to have negative values of $\dot\theta$. Then the trajectory converges to the circular orbit, giving $I = \Itr$.

For the second assertion, we have to show that $\norm{q(t)}$ cannot converge to its infimum as $t \to \infty$. If this was the case we would have $\sprod{q(t)}{p(t)} \to 0$. By compactness there is a sequence $t_n \to \infty$ of times such that $(q(t_n),p(t_n)) \to (q_\infty,p_\infty) \in \Sigma_E$ with $\sprod{q_\infty}{p_\infty} = 0$. By Prop.~\ref{VBed-Allg} its trajectory leaves the support which contradicts the continuity of the flow.
\end{proof}

\begin{ex}
We want to illustrate some of these results by an explicit example. Since the magnetic field $B=B(r)$ need only be locally Lipschitz, we may consider the simplest type of Lipschitz continuous function having compact support, e.g., $B:[0,\infty)\to\bR$ where
\begin{equation*}
B(r) = \begin{cases} 10(1-r) & \text{if } r\in[0,1] \\ 0 & \text{if } r\geq 1 \end{cases}
\end{equation*}
Then the magnetic momentum $I$ for the energy $E=1/2$ can be written as in \eqref{eq:integralPolarCoords}. The graph of $B$, together with the graph of the function $r\mapsto 1/r$, and the level sets of $I$ are shown in Fig.~\ref{fig:ex}.

\begin{figure}[ht]%
\begin{subfigure}[b]{.5\linewidth}
\centering
\includegraphics[width=0.8\columnwidth]{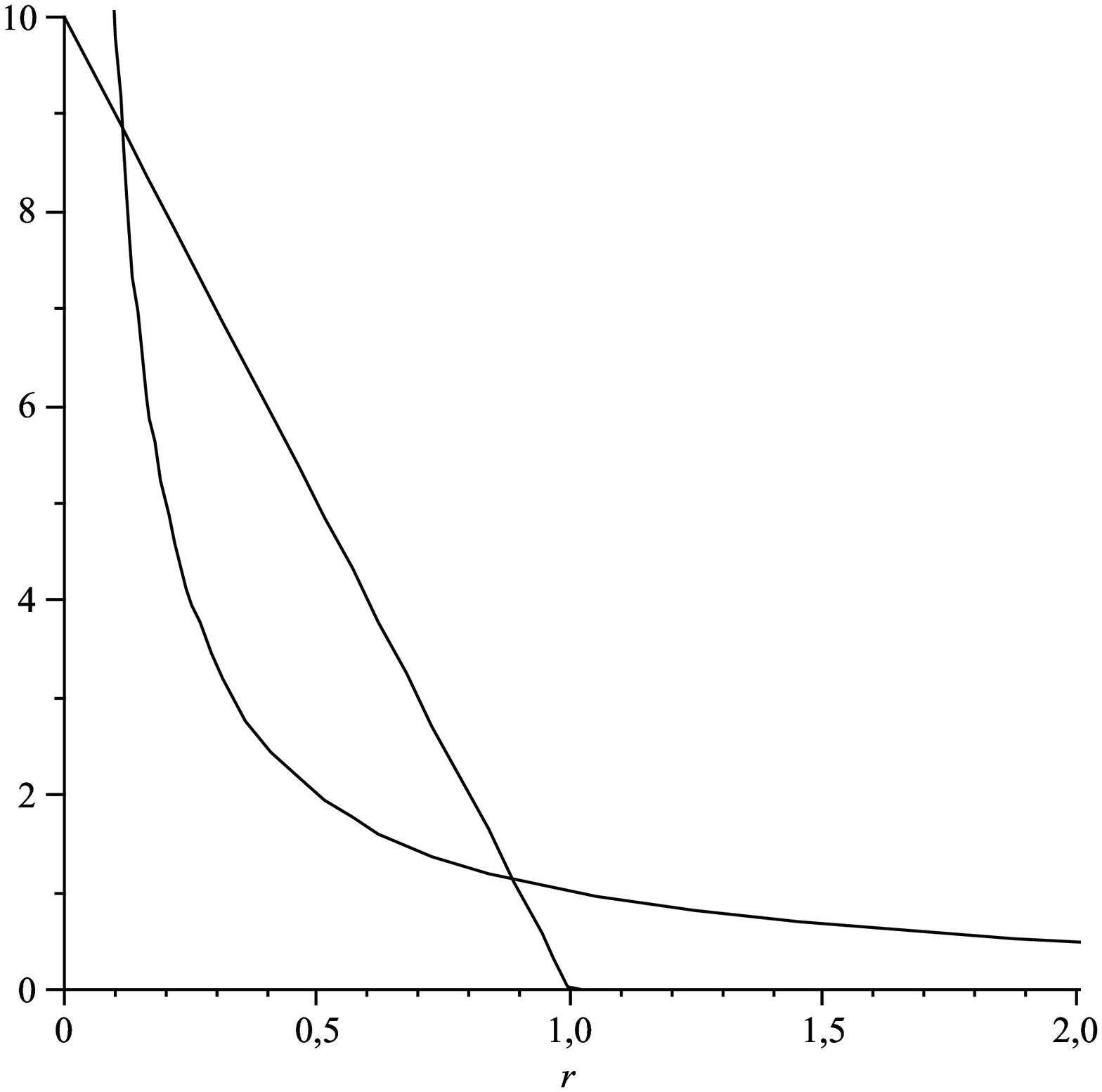}%
\caption{The graph of $B$}
\label{fig:exgraph}
\end{subfigure}
\begin{subfigure}[b]{.5\linewidth}
\centering
\includegraphics[width=0.8\columnwidth]{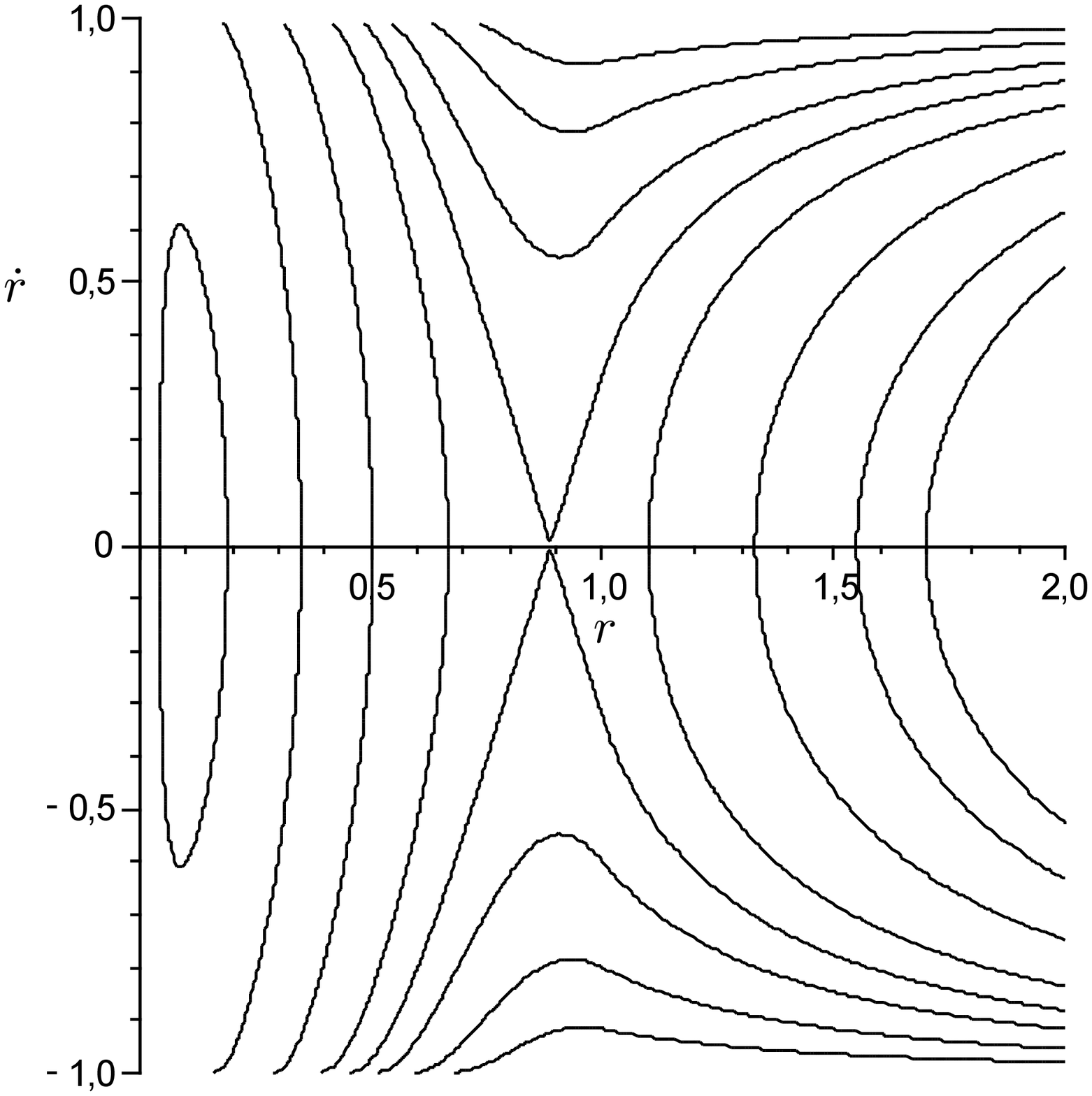}%
\caption{The level sets of $I$}
\label{fig:exlevelsets} 
\end{subfigure}
\caption{The magnetic field and its magnetic momentum}
\label{fig:ex}%
\end{figure}

Since there are only two intersections of the two graphs in Fig.~\ref{fig:exgraph}, we have precisely two circular orbits which are represented by the points $(R^{\pm},0)$ in Fig.~\ref{fig:exlevelsets}; the outermost circular orbit has radius $R^+ \approx .8873$. The level set of the critical momentum is the level set where $I=\Itr=I(R^+,0)\approx -.946$. One clearly recognizes in this example, e.g., the characterization of orbits having magnetic momentum $I\geq \Itr$ given in Prop.~\ref{charByMomentum}.
\end{ex}

For points $x \in U_E$ with $I(x) \le \Itr$ we now consider the angle 
\[
\theta(x,t) := \int_0^t \dot\theta(\tau) \intd{\tau} = \int_0^{t} \frac{\sprod{q(\tau)}{Jp(\tau)}}{\norm{q(\tau)}^2} \intd{\tau}
\]
as well as, for $I(x) < \Itr$, the exit angle $$ \theta^e(x) := \theta(T(x),x) $$ where $T(x)$ denotes the exit time of $x$. Then Lemma~\ref{thetaLeZero} immediately yields the next result.

\begin{prop} \label{thetaToInfty}
For fixed $x$ the function $\theta(x,t)$ is strictly decreasing with respect to $t$.
For $x$ with $I(x) = \Itr$ we have $\theta(x,t) \to -\infty$ as $t \to \infty$.
We have $\theta^e(x) \to -\infty$ as $I(x) \to \Itr$.
\end{prop}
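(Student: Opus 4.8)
The three assertions have rapidly increasing difficulty: the first two are essentially immediate from Lemma~\ref{thetaLeZero}, while the third requires, in addition, a compactness argument showing that the exit time tends to infinity, and that is where essentially all the work lies.

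\emph{Monotonicity and the case $I(x)=\Itr$.} Fix $x\in U_E$ with $I(x)\le\Itr$; since $\tfrac{\partial}{\partial t}\theta(x,t)=\dot\theta(t)$ along the orbit, the first two statements amount to controlling $\dot\theta$. By Lemma~\ref{qGeR} the orbit stays in $\{\norm q\ge R^+\}$, so Lemma~\ref{thetaLeZero} applies with $\rho=R=\norm{q(0,x)}$ and gives $\dot\theta(t)\le -R^+\sqrt{2E}/R^2<0$ whenever $\norm{q(t,x)}\le R$; in particular at $t=0$ this and \eqref{eq:integralPolarCoords} give $I(x)=R^2\dot\theta(0)<0$. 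Whenever instead $\norm{q(t,x)}\ge R$, the orbit is outside $\supp B$, so \eqref{eq:integralPolarCoords} reduces to $\norm{q(t)}^2\dot\theta(t)=I(x)<0$. In either case $\dot\theta(t)<0$ for all $t\ge0$, so $\theta(x,\cdot)$ is strictly decreasing. If moreover $I(x)=\Itr$, Prop.~\ref{charByMomentum}(1) gives $\norm{q(t,x)}\to R^+$, hence $\rho:=\sup_{t\ge0}\norm{q(t,x)}<\infty$, and Lemma~\ref{thetaLeZero} with this $\rho$ yields $\dot\theta(t)\le -R^+\sqrt{2E}/\rho^2$ for all $t\ge0$; integrating, $\theta(x,t)\le -R^+\sqrt{2E}\,t/\rho^2\to-\infty$.

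\emph{Reduction of the exit-angle statement.} Let now $x\in U_E$ with $I(x)<\Itr$. By Prop.~\ref{charByMomentum}(2) the orbit stays in $\{\norm q>R^+\}$, so by the remark after Prop.~\ref{VBed-Allg} the function $t\mapsto\norm{q(t,x)}^2$ is convex; as it equals $R^2$ at the entry time $0$ and at the exit time $T(x)$, it is $\le R^2$ on all of $[0,T(x)]$. Thus Lemma~\ref{thetaLeZero} with $\rho=R$ applies throughout $[0,T(x)]$ and gives $\dot\theta(t)\le -R^+\sqrt{2E}/R^2$ there, whence $\theta^e(x)=\theta(x,T(x))\le -\tfrac{R^+\sqrt{2E}}{R^2}\,T(x)$. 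It therefore suffices to prove that $T(x)\to\infty$ as $I(x)\to\Itr^-$.

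\emph{The exit time blows up.} This I would prove by contradiction, using compactness of $U_E\cong S^1\times[-1,1]$ and continuous dependence of the flow on initial data. If it failed, there would be $x_n\in U_E$ with $I(x_n)<\Itr$, $I(x_n)\to\Itr$, and $T(x_n)\le T_0$ for all $n$; passing to a subsequence, $x_n\to x_\infty=(q_0,p_0)\in U_E$ and $T(x_n)\to T_*\in[0,T_0]$, with $I(x_\infty)=\Itr$ and, by continuity of the flow, $(q,p)(T(x_n),x_n)\to(q,p)(T_*,x_\infty)$. From $x_\infty\in U_E$, $\sprod{q_0}{p_0}\le0$; equality is impossible, since then Prop.~\ref{VBed-Allg} would force $\norm{q(t,x_\infty)}\to\infty$, contradicting $\norm{q(t,x_\infty)}\to R^+$ from Prop.~\ref{charByMomentum}(1); so $\sprod{q_0}{p_0}<0$. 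Since $I(x_\infty)=\Itr$, the orbit of $x_\infty$ stays strictly outside the circle of radius $R^+$ (it is not the circular orbit, so if it met that circle it would do so with $\dot r\neq0$, forcing $I>\Itr$ as in the proof of Lemma~\ref{qGeR}), so $t\mapsto\norm{q(t,x_\infty)}^2$ is strictly convex, bounded (it tends to $(R^+)^2$) and has negative slope at $0$, hence strictly decreasing; thus $\norm{q(t,x_\infty)}<R$ for all $t>0$. But $\norm{q(T_*,x_\infty)}=\lim_n\norm{q(T(x_n),x_n)}=R$, forcing $T_*=0$, and then the exit condition $\sprod{q(T(x_n),x_n)}{p(T(x_n),x_n)}\ge0$ passes to the limit as $\sprod{q_0}{p_0}\ge0$ --- contradicting the previous line. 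Hence $T(x)\to\infty$, and with the displayed bound the claim follows. The crux is exactly this last step; the delicate points are identifying $x_\infty$ as carrying the critical momentum, pinning $T_*=0$, and playing the entry condition against the limiting exit condition to contradict the sign of $\sprod{q_0}{p_0}$.
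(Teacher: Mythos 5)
Your proof is correct and follows the route the paper intends: everything is derived from Lemma~\ref{thetaLeZero} (together with Lemma~\ref{qGeR} and Prop.~\ref{charByMomentum}). The paper itself offers no written argument beyond the remark that the proposition ``immediately'' follows from Lemma~\ref{thetaLeZero}, so your careful treatment of the third assertion --- in particular the compactness argument showing that the exit time $T(x)$ blows up as $I(x)\to\Itr$ --- is a correct and welcome elaboration of a step the paper leaves entirely implicit.
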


\begin{rem} \label{integralRThetaGeneral}
The previous observations hold for $B(R^+) > 0$. In fact, these calculations work in a similar way for the case $B(R^+) < 0$ where one has to switch the sign of $\dot\theta$ since the circular orbit turns in the opposite direction. Thus, for $B(R^+) < 0$, we obtain
\begin{equation} \label{eq:DefTrMom+}
\Itr = R^+ \sqrt{2E} - \int_{R^+}^R B(r)r \intd{r}
\end{equation}
as the critical magnetic momentum, and the analogue of Prop.~\ref{charByMomentum} now reads
\begin{enumerate}
	\item $I = \Itr\ \Longleftrightarrow \ \lim_{t\to\infty} \norm{q(t)} = R^+$ (no changes here).
	\item $I > \Itr \ \Longrightarrow \ \min_{t\in\bR} \norm{q(t)}> R^+$.
\end{enumerate}
Furthermore, we have $\dot\theta > 0$ for $I \ge \Itr$, and $\theta^e(x) \to \infty$ as $I(x) \to \Itr$.
\end{rem}


\section{Symbolic Dynamics} \label{Sect:SymbDyn}

In the following we consider a magnetic field $$ B = \sum_{k=1}^n B_k $$ where each component $B_k$ is rotationally symmetric with respect to its respective center $q_k\in\bR^2$, i.e., $B_k= B_k(\|q-q_k\|)$, with  $$ \supp B_k\subseteq D_k := \{q\in\bR^2\mid \|q-q_k\|\le R_k\}
\qquad\mbox{and}\qquad D_k \cap D_l = \emptyset\quad \mbox{ for }k\neq l. $$
 For a rotationally symmetric magnetic field $B$, we saw in Section~\ref{Sect:Symm} that, for energies below some threshold $E^\circ = E^\circ(B)> 0$, there is a circular orbit. In the case of several components we require the existence of circular orbits for each component $B_k$ and hence define 
\[
E^\circ := \min_{1\leq k\leq n} E^\circ(B_k) > 0 .
\]

We point out that the magnetic momentum introduced in Prop.~\ref{integralCartesianCoords} is no longer a constant of motion since $B$ is not rotationally symmetric anymore, but for each $B_k$ we may compute its local magnetic momentum $I_k$ (with $q_k$ taking the part of the origin) and obtain a local integral  in the sense that $I_k$ is constant along a trajectory as long as it stays outside the other supports.

For the following considerations we fix an energy $E \in (0, E^\circ]$ and let $R_k^+ = R_k^+(E)$ denote the radius of the outermost circular orbit in $D_k$. We write $\Itr_k = \Itr_k(E)$ for the critical magnetic momentum for $B_k$ as in \eqref{eq:DefTrMom-} or \eqref{eq:DefTrMom+}, depending on the sign of $B_k$ along the circular orbit of radius $R_k^+$.

\begin{rem}
Instead of looking at the value $I(x)$ of the magnetic momentum we can consider the angle $\alpha$ between $q$ and $Jp$. Since we are interested in certain entrance directions it is more convenient for the following to think of a critical angle $\atr_k$ instead of a critical momentum $\Itr_k$. We have an orientation preserving homeomorphism between the momentum $I \in [- R \sqrt{2E}, R \sqrt{2E}]$ and the angle $\alpha \in [- \frac{\pi}{2}, \frac{\pi}{2}]$.
\end{rem}

In what follows we will need two geometric conditions on the configuration of the components $\supp B_k$ of the support of the magnetic field. Magnetic fields satisfying these conditions are said to be in general position, and we will derive a precise definition in the following (see Def.~\ref{defgenpos}). 
\begin{enumerate}
\item
The first condition will allow transitions from one support to any other. 

The easiest way to guarantee this would be to demand for the convex hull of two supports to have empty intersection with the other supports. 

However, this is a rather restrictive condition and we will use a weaker one, only demanding for certain parts of the convex hull---depending on the chosen energy---to have an empty intersection; see Fig.~\ref{fig:shadowingSupports} for an illustration. 
\item 
The second condition will assure that we can choose an appropriate Poincar\'e section which counts the revolutions around a given center $q_k$ in the right way.

\begin{figure}[ht]%
\centering
\includegraphics[width=0.4\columnwidth]{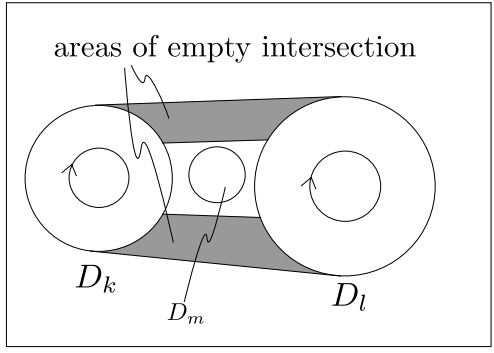}%
\caption{Areas of empty intersection}%
\label{fig:shadowingSupports}%
\end{figure}

The transition corridors between two supports, as depicted in Fig.~\ref{fig:shadowingSupports}, depend on the orientation of their circular orbits, or equivalenty, on the sign of $B$ along the orbit. For this we consider the four possible sign combinations. If for fixed $k \neq l$ both signs are positive we consider the tangent line to $\partial D_k$ and $\partial D_l$ passing both supports to the left and the line which hits $\partial D_k$ with angle $\atr_k$ and $\partial D_l$ with angle $\atr_l$; see Fig.~\ref{fig:DefA_kl_1}. 

By $A_{k,l}$ and $B_{l,k}$ we denote the part of the set of points on $\partial D_k$ and $\partial D_l$ between these two lines as shown in Figure \ref{fig:DefA_kl_2}. If both signs are negative we basically consider the same lines as in the previous case but with left tangent replaced by right tangent.

\begin{figure}[ht]%
\begin{subfigure}[b]{.5\linewidth}
\centering
\includegraphics[width=0.8\columnwidth]{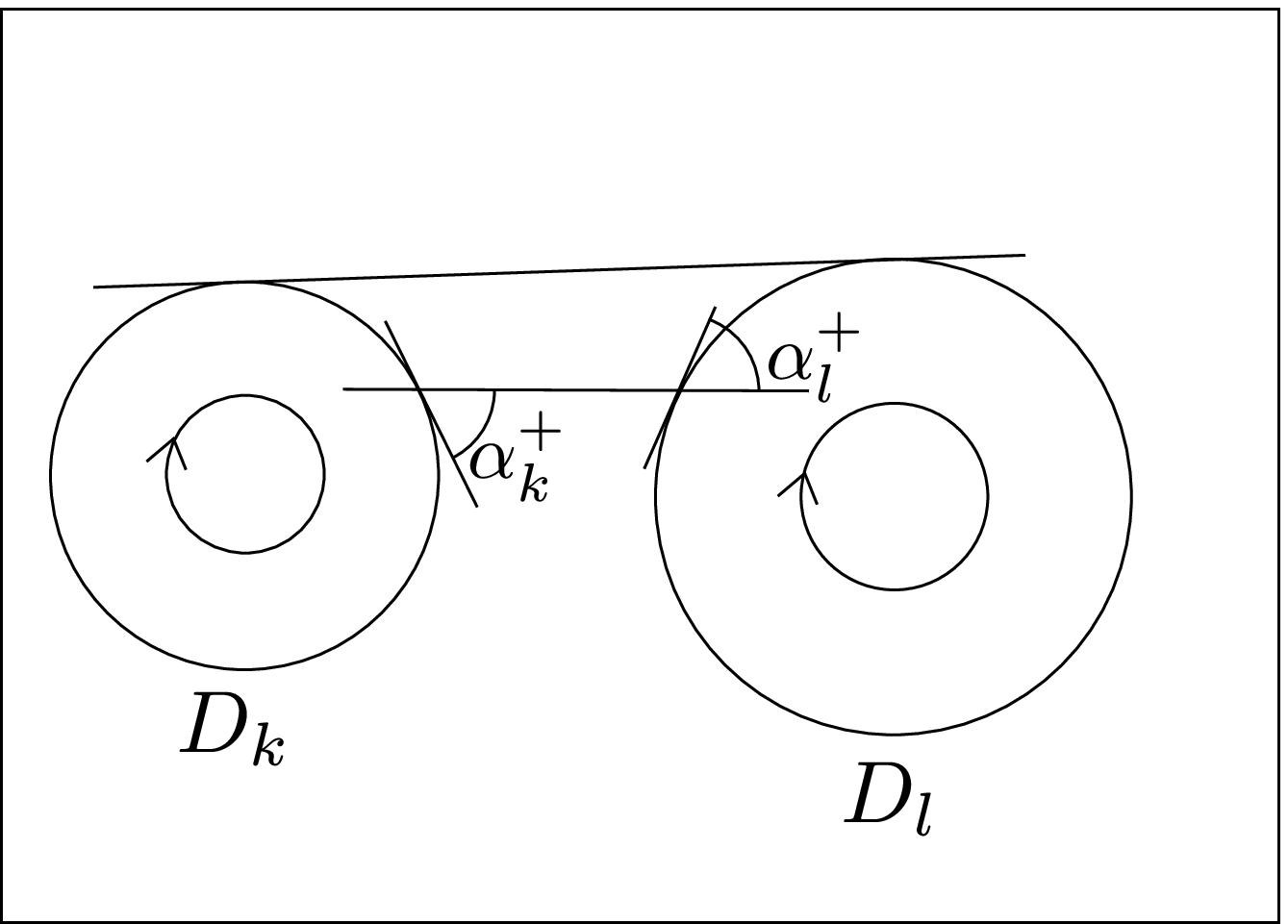}%
\caption{$\atr_k$ and $\atr_l$}
\label{fig:DefA_kl_1}
\end{subfigure}
\begin{subfigure}[b]{.5\linewidth}
\centering
\includegraphics[width=0.8\columnwidth]{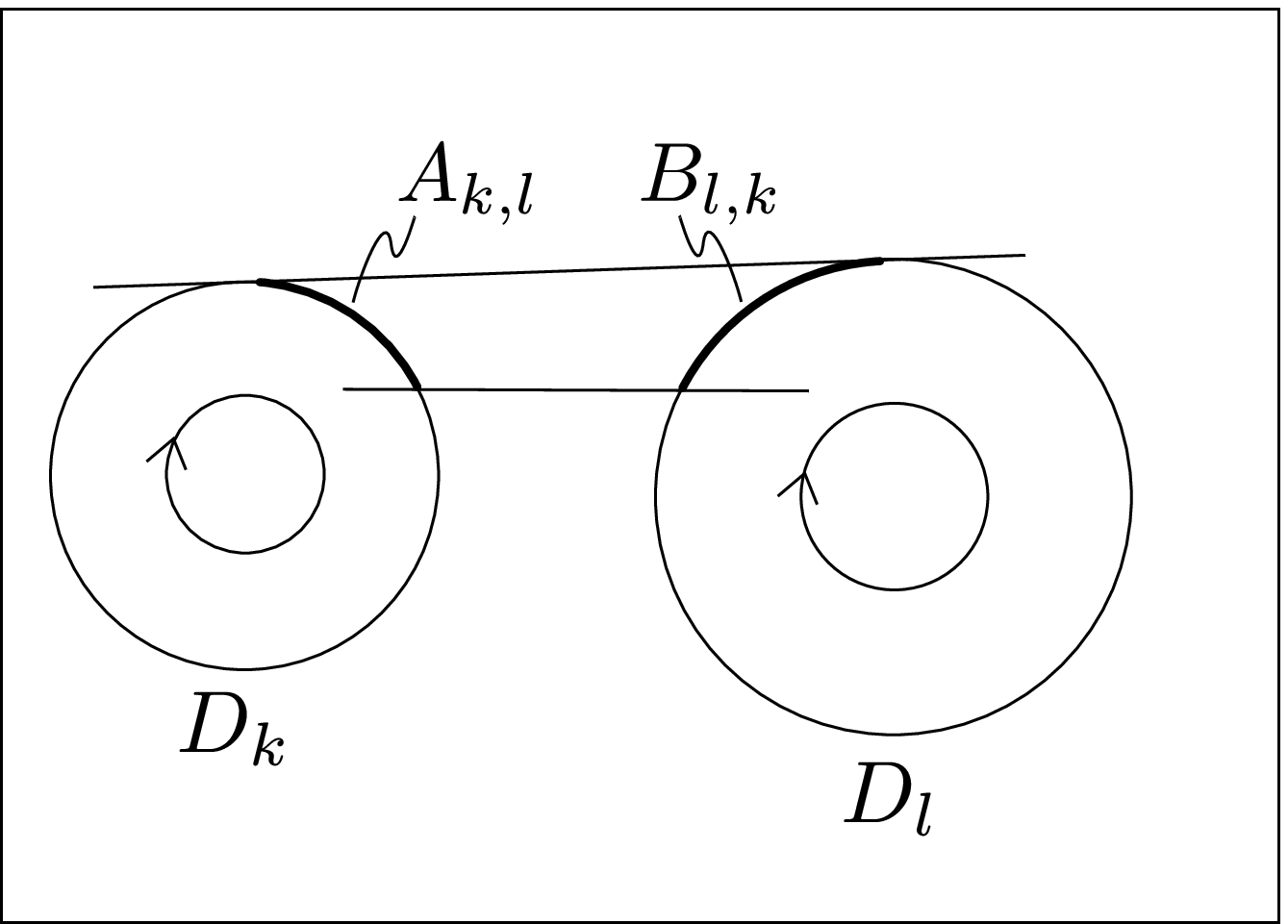}%
\caption{$A_{k,l}$ and $B_{l,k}$}
\label{fig:DefA_kl_2} 
\end{subfigure}
\caption{Definition of $\atr_k$ and $\atr_l$, respectively, $A_{k,l}$ and $B_{k,l}$}%
\label{fig:DefAB_kl}%
\end{figure}

\begin{figure}[ht]
\centering
\includegraphics[width=0.4\columnwidth]{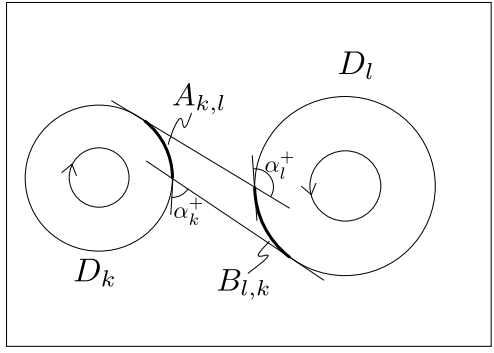}%
\caption{The case of opposite signs}
\label{fig:DefAklBkl+-} 
\end{figure}

If one sign is positive, say of $B_k$, and the other, say of $B_l$, is negative along the circular orbits we consider the left tangent to $D_k$ which hits $\partial D_l$ with angle $\atr_l$ and the line which hits $\partial D_k$ with angle $\atr_k$ and is tangent to $D_l$ on the right hand side; see Fig.~\ref{fig:DefAklBkl+-} for a visualisation and the choice of $A_{k,l}$ and $B_{l,k}$ in this case.
\end{enumerate}

Let us point out that the sets $A_{k,l}$ and $B_{l,k}$, as well as the critical angles $\atr_k$ and $\atr_l$, depend on the energy value $E\in (0,E^{\circ}]$. We now give the precise definition of the geometric configuration we are going to impose. We denote by $\conv M$ the convex hull of a set $M$.

\begin{defin} \label{defgenpos}
Let $B = \sum_{k=1}^n B_k$ be a magnetic field on $\bR^2$ such the $B_k$ are rotationally symmetric (with respect to their respective centers $q_k$) and have pairwise disjoint supporting
disks $D_k = \{ q\in\bR^2 \mid \norm{q - q_k} \le R_k\}$. 
We then say that $B$ is {\bf in general position with respect to an energy} 
$E \in (0,E^\circ]$ if we have
\begin{enumerate}
	\item $\conv (A_{k,l} \cup B_{l,k}) \cap \supp B_m = \emptyset$ for distinct $k, l, m \in \{1, \dots, n \}$, and
	\item $\partial D_k \setminus (\bigcup_{l \neq k} A_{k,l} \cup \bigcup_{l \neq k} B_{k,l}) \neq \emptyset$ for all $k$.
\end{enumerate}
\end{defin}

\begin{rem}
\begin{enumerate}
  \item Note that the $B_k$ need not have a fixed sign. There may also be parts of the disc $\{\norm{q-q_k} \le R_k \}$ where $B_k$ vanishes.
  \item The first condition guarantees that all the transitions from $D_k$ to any other $D_l$ are possible. The second condition allows us to put a Poincar\'e section at an appropriate place; it will be a radial segment ending in the nonempty set considered there.
  \item Replacing the first condition by the much weaker condition $\conv (D_k \cup D_l) \cap D_m = \emptyset$ would still guarantee all the transitions, but exclude very simple configurations ($n=4$ is enough) which then would not satisfy Condition~2.
\end{enumerate}
\end{rem}

Condition~2 assures the existence of points 
$$ q_k^* \ \in \ \partial D_k \Big\backslash \Big(\bigcup_{l \neq k} A_{k,l} \cup \bigcup_{l \neq k} B_{k,l}\Big) , $$ 
and we pick such points $q_k^*$ and hold them fixed once and for all. With these points we now define the Poincaré sections 
\[
P_{k,E} = \big\{ (q,p) \in \Sigma_E \sd q = q_k + \lambda (q_k^* - q_k) \text{ for some }\lambda \in (0,1), \sprod{q-q_k}{Jp} \neq 0 \big\} ,
\]
as well as the Poincaré map 
$$ p:\ P_E := \bigcup_{k=1}^n P_{k,E} \ \longrightarrow  \ \overline{P_E} \cup \{ \infty \} $$ 
by defining $p(x)$ as the first point $\Phi(t,x) \in \overline{P_E}$ for $t > 0$ if such a point exists; we set $p(x) = \infty$ if such a point does not exist. See Fig.~\ref{fig:DefPoincareMap}) for illustration. 
Then $p$ is continuous at $x\in P_{E}$ if $p(x) \in P_{E}$. In particular, if $x_m \to x_\infty \in P_E$ and $p(x_m) \to y_\infty \in P_E$, then the continuity of the flow yields $p(x_\infty) = y_\infty$ and $p$ is continuous at $x_\infty$.

\begin{figure}[ht]%
\centering
\includegraphics[width=0.4\columnwidth]{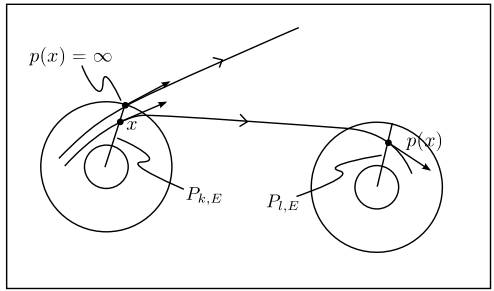}%
\caption{Definition of the Poincaré map $p$}%
\label{fig:DefPoincareMap}%
\end{figure}

Then 
\begin{equation} \label{LambdaE}
 \Lambda_E = \{ x\in P_E \mid p^i(x) \in P_E \text{ for all } i \in \bZ \} 
\end{equation}
is an invariant set for $p$ which, at this point, might still be empty. Let 
\begin{equation} \label{semi}
h: \Lambda_E \to S = \{ 1, \dots, n\}^\bZ
\end{equation}
denote the canonical coding map where $h(x) = (s_i)$ such that $p^i(x) \in P_{s_i}$ for all 
$i \in \bZ$, and let $\sigma: S \to S$ be the left shift map, shifting a sequence $(s_i)$ 
one position to the left. By construction we get the identity 
\[ h \circ p = \sigma \circ h . \]

We now formulate the main theorem of this paper.

\begin{thm} \label{thmSymDyn}
If the planar magnetic field $B$ is in general position with respect to the energy $E\in (0,E^\circ]$, 
then there is a nonempty $p$-invariant subset $\Lambda_E^{'} \subset \Lambda_E$ 
such that $h: \Lambda_E^{'} \to S$ is continuous and surjective; 
in other words, $p: \Lambda_E^{'} \to \Lambda_E^{'}$ is semi-conjugated to the full 
shift $\sigma: S \to S$.
\end{thm}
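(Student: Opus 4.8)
The plan is to construct $\Lambda_E'$ as the set of orbits whose itinerary among the Poincar\'e sections $P_{k,E}$ can be \emph{prescribed arbitrarily}, by building, for every bi-infinite symbol sequence $(s_i)\in S$, at least one orbit realising it. The natural route is a standard shadowing/graph-transform argument adapted to this geometric setting: one shows that from any point on $P_{k,E}$ moving ``outward'' one can aim the orbit into any other disc $D_l$, arriving on $P_{l,E}$, and that this correspondence is continuous enough to pass to bi-infinite concatenations via a compactness argument.

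\medskip
\noindent\textbf{Step 1: Single-transition map.} Fix $k\neq l$. I would first show that the set of points $x\in P_{k,E}$ for which the forward orbit leaves $D_k$ through the arc $A_{k,l}$ (or, in the mixed-sign cases, the appropriate arc), travels in an essentially free manner across the ``corridor'', and enters $D_l$ through $B_{l,k}$, is nonempty, open, and is mapped by $p$ onto a set that stretches \emph{across} $P_{l,E}$ in the $\dot\theta$-direction. The key analytic input is Prop.~\ref{thetaToInfty} together with Prop.~\ref{charByMomentum} and Lemma~\ref{thetaLeZero}: by choosing the local magnetic momentum $I_k(x)$ arbitrarily close to $\Itr_k$ (equivalently, the entrance angle close to $\atr_k$), the orbit inside $D_k$ winds around $q_k$ an arbitrarily large number of times before exiting, so its exit point on $\partial D_k$ sweeps the entire arc $A_{k,l}$; Corollary~\ref{enteringExitingAngleEqual} guarantees the exit angle equals the entrance angle, so the exit direction is controlled. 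Condition~1 of Def.~\ref{defgenpos} ensures that the straight segment from the exit point on $A_{k,l}$ to $B_{l,k}$ misses every other support $\supp B_m$, so the orbit crosses the corridor freely and genuinely enters $D_l$; Condition~2 (the choice of $q_k^*$, $q_l^*$ outside all the $A$-, $B$-arcs) ensures that in doing so the orbit must cross the radial segment defining $P_{l,E}$ transversally, i.e.\ with $\sprod{q-q_l}{Jp}\neq 0$, so $p(x)\in P_{l,E}$ indeed. Here I also need to check that for $I_k$ near $\Itr_k$ the orbit, after exiting $D_k$, does not re-enter $D_k$ before reaching $D_l$ --- this follows because outside $D_k$ the motion is free (a straight line) and the exit geometry, controlled by $\atr_k$, points it into the corridor.

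\medskip
\noindent\textbf{Step 2: Covering / connectedness structure.} Let $V_{k,l}\subset P_{k,E}$ be the set constructed in Step 1. I would argue that $p|_{V_{k,l}}:V_{k,l}\to P_{l,E}$ is a continuous map (continuity of $p$ where it lands in $P_E$ was already noted in the text) whose image ``crosses'' $P_{l,E}$ in the sense that it contains a connected piece joining the two $\dot\theta$-boundary components of $P_{l,E}$ --- intuitively a full ``horizontal strip''. This is the magnetic-billiard analogue of the Conley--Moser / covering-relation condition. Since each $P_{k,E}\cong S^1\times\{\dot\theta\text{-interval}\}$-type annulus maps across each $P_{l,E}$, $l\neq k$, a nested-intersection argument applies: for a fixed sequence $(s_i)_{i\in\bZ}$ set
\[
\Lambda_{(s_i)} \;=\; \bigcap_{N\ge 0}\ \overline{\bigl\{\, x\in P_{s_0,E}\ \big|\ p^i(x)\in P_{s_i,E}\ \text{for } 0\le i\le N \ \text{and}\ p^{-i}(x)\in P_{s_{-i},E}\ \text{for } 0\le i\le N \,\bigr\}}.
\]
Each set in the intersection is nonempty (by iterating the crossing property of Step 1 both forward and backward) and compact after restricting to the compact slice $\{|\,\dot\theta\,|\le\sqrt{2E}/R_{s_0}\}$ of $\overline{P_{s_0,E}}$, and they are nested, so the intersection is nonempty. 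Any $x\in\Lambda_{(s_i)}$ whose whole orbit avoids $\partial$ of the sections (a full-measure / generic condition one arranges by choosing $x$ in the interior of the nested sets) lies in $\Lambda_E$ and has $h(x)=(s_i)$.

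\medskip
\noindent\textbf{Step 3: Assembling $\Lambda_E'$ and continuity of $h$.} Define $\Lambda_E' = \bigcup_{(s_i)\in S}\Lambda_{(s_i)}\cap\Lambda_E$. It is $p$-invariant by construction (shifting the itinerary corresponds to applying $p$), and $h:\Lambda_E'\to S$ is surjective by Step 2. For continuity: if $x_m\to x_\infty$ in $\Lambda_E'$, then for each fixed $i$ the points $p^i(x_m)\to p^i(x_\infty)$ by continuity of $p$ on $P_E$ (each iterate stays in $P_E$), hence $p^i(x_m)$ eventually lies in the same section $P_{s_i,E}$ as $p^i(x_\infty)$ since the sections are open and disjoint --- so $h(x_m)\to h(x_\infty)$ in the product topology on $S$. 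This gives the semi-conjugacy $h\circ p=\sigma\circ h$ with $h$ continuous and onto, which is the assertion.

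\medskip
\noindent\textbf{Main obstacle.} The crux is Step 1--2: proving that the single-transition sets $V_{k,l}$ genuinely \emph{stretch across} $P_{l,E}$ in the right direction, uniformly enough that the two-sided nested intersection in Step 2 is nonempty and lands in $\Lambda_E$ rather than degenerating onto the boundary $\overline{P_E}\setminus P_E$ (where $p$ need not be continuous and orbits may escape to $\infty$). This is exactly where both geometric conditions of Def.~\ref{defgenpos} must be used in tandem with the winding estimate $\theta^e(x)\to-\infty$ as $I(x)\to\Itr_k$ from Prop.~\ref{thetaToInfty}; quantifying ``how close to $\Itr_k$'' one must take the momentum to cover a prescribed arc, and checking this is compatible simultaneously for all required transitions, is the technical heart of the argument.
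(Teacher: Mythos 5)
Your overall strategy is the same as the paper's: use the winding estimate $\theta^e(x)\to-\infty$ as $I_k(x)\to \Itr_k$ to sweep the exit point over the arc $A_{k,l}$, use Condition~1 of Def.~\ref{defgenpos} to cross the corridor into $D_l$, iterate to get nested compact sets realising finite itineraries, and pass to the limit. (The paper organises the iteration slightly differently: the object that is propagated is a curve in the entry set $U_k$ whose two endpoints are \emph{tangent} and \emph{critical-momentum} respectively --- Lemma~\ref{shootingMechanismSpecial} shows this endpoint configuration reproduces itself in $U_l$, which is the precise "covering" structure, rather than a full $\dot\theta$-crossing of $P_{l,E}$; bi-infinite sequences are then obtained by realising shifted half-infinite itineraries and extracting a convergent subsequence $x_m=p^m(y_m)\to x_\infty$.)

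The genuine gap is exactly at the point you flag as the "main obstacle" and then dispose of with "a full-measure / generic condition one arranges by choosing $x$ in the interior of the nested sets": this is not available. The nested intersection $\Lambda_{(s_i)}$ is an intersection of \emph{closures}, and it may a priori consist entirely of points on $\partial P_E$, where $p$ is not continuous and the itinerary need not pass to the limit; the interiors of your nested sets may have empty intersection. Excluding this is the main content of the paper's proof of surjectivity and it is a genuine argument, not a genericity remark. Concretely, one shows the limit point $x_\infty=(q_\infty,p_\infty)$ cannot lie in $\partial P_{k_0,E}=\{q=q_{k_0}\}\cup\{q=q_{k_0}^*\}\cup\{\sprod{q-q_{k_0}}{Jp}=0\}$: the last two alternatives are killed by Lemma~\ref{thetaLeZero} ($\dot\theta$ is strictly negative, uniformly, for orbits staying in the annulus $R_{k_0}^+\le\norm{q-q_{k_0}}\le R_{k_0}$ --- this is why the paper defines $\Lambda_E'$ as the set of orbits avoiding the open $R_k^+$-disks, a restriction absent from your construction), and the alternative $q_\infty=q_{k_0}^*$ is killed by a three-case analysis: if $k_{\pm1}\neq k_0$ the limit trajectory would have to reach $D_{k_{\pm1}}$ from the point $q_{k_0}^*$, contradicting the choice of $q_{k_0}^*$ outside all arcs $A_{k_0,l}$ and $B_{k_0,l}$ (this is the actual role of Condition~2, which you instead invoke for transversality of the section crossing --- that part follows from Lemma~\ref{thetaLeZero}); and if $k_{-1}=k_0=k_1$ one derives a contradiction from a tangency argument, since $\sprod{q_\infty-q_{k_0}}{p_\infty}=0$ forces the limit orbit to leave $D_{k_0}$ while the approximating orbits stay inside. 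Without this step your Steps~2--3 do not yield surjectivity of $h$, so the proposal as written is incomplete at its decisive point.
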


Since a dynamical system has at least the entropy of the one it is semi-conjugated to, 
we immediately obtain the following corollary for the corresponding magnetic flow. Here we use Bowen's definition of topological entropy (see, e.g., \cite[\S~7.2]{Wa}), and denote the flow restricted to the energy surface $\Sigma_E$ by $\Phi_E := \Phi|_{\bR\times \Sigma_E}$.

%
%
\begin{cor}
For $n\ge2$ bumps and a planar magnetic field $B=\sum_{k=1}^n B_k$ in general position, there is a constant $c>0$ so that, for each $E\in (0,E^\circ]$, the topological entropy is bounded from below by
$$ h_{top}(\Phi_E) \ge c \sqrt{E} . $$
\end{cor}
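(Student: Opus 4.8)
The plan is to reduce the corollary entirely to Theorem~\ref{thmSymDyn} together with two standard facts: first, that the topological entropy of the full shift $\sigma:S\to S$ on $n$ symbols is $\log n$; and second, that the topological entropy of the time-one map of $\Phi_E$ (equivalently, the topological entropy of the flow $\Phi_E$ in Bowen's sense) is at least the topological entropy of the Poincar\'e map $p$ restricted to any compact invariant set, up to an explicit factor accounting for the return time. Concretely, I would first invoke Theorem~\ref{thmSymDyn} to get the nonempty $p$-invariant set $\Lambda_E'$ with $h:\Lambda_E'\to S$ continuous and surjective, whence $h_{top}(p|_{\Lambda_E'})\ge h_{top}(\sigma)=\log n$, since a dynamical system has at least the entropy of any system it factors onto.

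Next I would pass from the Poincar\'e map to the flow. The energy surface $\Sigma_E$ is non-compact, but the relevant invariant set lives in a bounded region (orbits coded by $\Lambda_E'$ never escape the union of the discs $D_k$ and the transition corridors between them, all contained in a fixed compact set independent of $E$), so Bowen's entropy is well-behaved there. The standard suspension/Poincar\'e-section argument gives $h_{top}(\Phi_E)\ge h_{top}(p|_{\Lambda_E'})/\tau_{\max}(E)$, where $\tau_{\max}(E)$ is an upper bound for the first-return time to the section $P_E$ along orbits in $\Lambda_E'$. Thus it remains to bound $\tau_{\max}(E)$ from above by something of order $1/\sqrt{E}$.

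This last estimate is the only genuinely quantitative step, and I expect it to be the main point requiring care. The speed along every orbit is $\norm{p}=\sqrt{2E}$, and the orbits coded by $\Lambda_E'$ travel through a geometrically bounded region: each return consists of a passage near some center $q_k$ (bounded geometric length, because the trajectory stays in $D_k$ whose radius $R_k$ is fixed and the angular velocity is bounded below in modulus by Lemma~\ref{thetaLeZero}, controlling the time spent inside) followed by a transition segment to the next center (again of geometrically bounded length). Hence the arclength of one return is bounded by a constant $\ell_0$ depending only on the configuration, not on $E$, so $\tau_{\max}(E)\le \ell_0/\sqrt{2E}$. Combining,
$$ h_{top}(\Phi_E)\ \ge\ \frac{h_{top}(p|_{\Lambda_E'})}{\tau_{\max}(E)}\ \ge\ \frac{\log n}{\ell_0/\sqrt{2E}}\ =\ \frac{\sqrt{2}\,\log n}{\ell_0}\,\sqrt{E}, $$
which is the claimed bound with $c=\sqrt 2\,\log n/\ell_0$. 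The subtlety to watch is the lower bound on time spent inside each $D_k$: one must ensure the return time does not degenerate to zero, i.e.\ that orbits in $\Lambda_E'$ genuinely wind around some center, so that $\tau_{\min}(E)>0$ as well; this is guaranteed by the coding construction, since every point of $\Lambda_E'$ visits each symbol and the passage near a center takes positive geometric length uniformly. The factor $\sqrt{E}$ then comes purely from the rescaling of time by the speed $\sqrt{2E}$.
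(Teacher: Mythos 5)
Your proposal is correct and follows essentially the same route as the paper: invoke Theorem~\ref{thmSymDyn} to get entropy at least $\log n$ for the section map via the semi-conjugacy, relate flow entropy to section-map entropy through the return time (the paper does this via the special flow over the shift and Abramov's formula, which is exactly your $h_{top}(\Phi_E)\ge h_{top}(p|_{\Lambda_E'})/\tau_{\max}(E)$), and bound the return time by $O(1/\sqrt{E})$ using the constant speed $\sqrt{2E}$ and the $E$-independent bound on the arclength between successive section crossings.
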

\begin{proof}
By the monotonicity of the topological entropy with respect to the inclusion of invariant sets \cite[Prop.~3.1.7(1)]{KH}, it suffices to show that the topological entropy of the flow
$\Phi_{B_E}:= \Phi|_{\bR\times B_E}$ satisfies the same lower bound,
with $B_E:=\Phi(\bR,\Lambda_E)$ and $\Lambda_E\subseteq P_E$
defined in (\ref{LambdaE}).

We now use Thm.\ \ref{thmSymDyn} above. By the decrease of the topological entropy with respect to semi-conjugacies \cite[Thm.~7.2]{Wa}, $h_{top}(\Phi_{B_E})$ is at least the topological entropy of the special flow $\psi$ given in \cite[\S 0.3]{KH} of the full shift 
with a certain roof function $T_E:S\to (0,\infty)$. Indeed, as a surjection, the semi-conjugacy $h$ from (\ref{semi}) has a left inverse $S\to \Lambda'_E$, and choosing such an inverse, $T_E$ is defined as its pull-back of the Poincar\'e return time
$$ \tilde T_E:\Lambda'_E\to (0,\infty),\qquad  \tilde T_E(x):=\inf\{t>0\mid \Phi(t,x)\in P_E\} . $$

We claim that there is a constant $c'>0$ such that, for each $E\in(0,E^\circ)$, $\tilde T_E$ is uniformly bounded above by $c'/ \sqrt{E}$. This is true for points of $\Lambda'_E$ being mapped to a Poincar\'e surface  belonging to a different bump, since on $\Sigma_E$ the speed equals $\sqrt{2E}$ and the mutual distances of the configuration space projections of the Poincar\'e surfaces are uniformly bounded from above. Then the Poincar\'e time is bounded from above by that distance bound, divided by the speed. For points of $\Lambda'_E$ being mapped to the Poincar\'e surface of the same bump, a similar argument
applies, since the total length of the trajectory between successive intersections of Poincar\'e surfaces is bounded from above by a constant times the circumference of the disk $D_k$.

By applying Abramov's formula, see Thm.~2.2.1 in \cite[Ch.~3]{Si}, we finally conclude like in the proof of Thm.~14.4 of \cite{kn2} that $$ h_{top}(\psi)\ge \frac{\log n}{c'} \sqrt{E} . $$ This implies the assertion of the corollary with $c:=\frac{\log n}{c'}$.
\end{proof}


\subsection{Proof of Theorem~\ref{thmSymDyn}}

For the following considerations we need to define various sets and maps. Recall from 
(\ref{eq:UE1}) in Sect.~\ref{Sect:Symm} the definition of the set $U_E$ of points in 
phase space through which $q$-space orbits can enter the support; due to the different 
centers of the supports this will now replaced by 
\[
U_E = \bigcup_{k=1}^n U_{k,E}\quad\mbox{ where }\quad
U_{k,E} = \big\{ (q,p) \in \Sigma_E \sd \norm{q - q_k} = R_k,\ \sprod{q-q_k}{p} \le 0 \big\} .
\]
We also need two additional maps that describe the relation between $P_E$ and $U_E$. 
 For some orbit entering the support let \[u: U_E \to P_E \cup \{ \infty \}\] denote the first point 
 $\Phi(t,x) \in P_E$, $t \ge 0$, to hit the Poincaré section. For some orbit intersecting $P_E$ 
 let \[v: P_E \to U_E \cup \{ \infty \}\] denote the first point $\Phi(t,x) \in U_E$, $t \ge 0$, 
 where the orbit enters the support again. In case such points do not exist $u(x)$, 
 respectively $v(x)$, are set to $\infty$.

In order to understand the basic mechanism to find a point that realises a prescribed itinerary, 
we start with a segment $\Gamma\subset U_k$ consisting of phase space points 
entering $D_k$.
We assume that the trajectory of 
one endpoint of $\Gamma$ passes $D_k$ tangentially to the left and the other 
endpoint has critical momentum 
$\Itr_k$, i.e., its trajectory converges to the outermost circular orbit inside $\supp B_k$,
see Fig.~\ref{fig:shootingMechanismA}. 

Then we find in $\Gamma$ a subsegment of points whose trajectories hit 
$P_k$ exactly $j$ times before leaving $D_k$ towards $D_l$. 
In particular, the resulting segment of points in $U_l$ has the same configuration 
as the original segment entering $U_k$: the trajectory of one endpoint passes 
$D_l$ tangentially to the left and the other endpoint has critical momentum $\Itr_l$. 
This behaviour is illustrated in Fig.~\ref{fig:shootingMechanismB} and made precise 
in the following lemma.

\begin{figure}[ht]%
\begin{subfigure}[b]{.5\linewidth}
\centering
\includegraphics[width=.8\columnwidth]{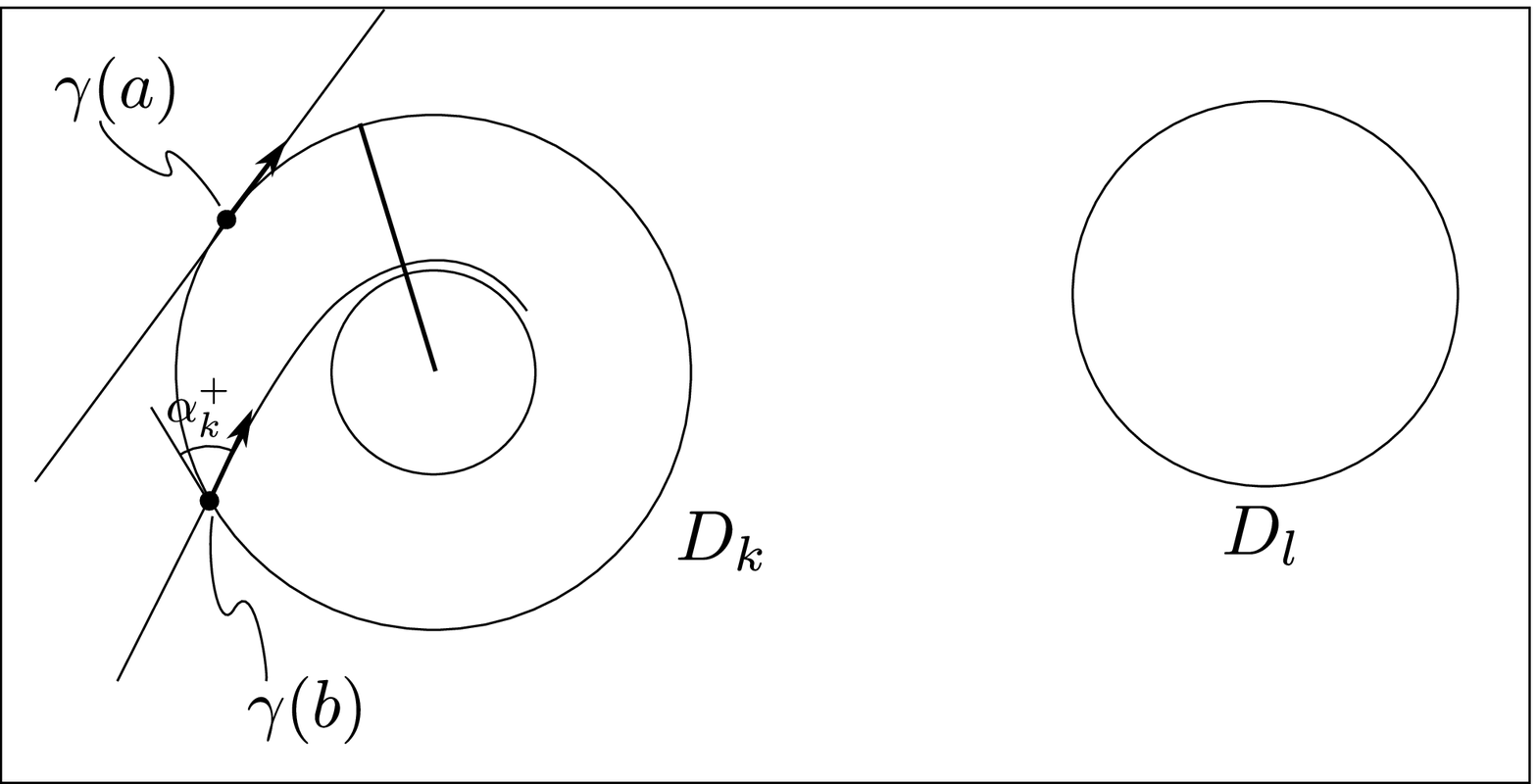}%
\caption{Endpoints of original set}%
\label{fig:shootingMechanismA}
\end{subfigure}
\begin{subfigure}[b]{.5\linewidth}
\centering
\includegraphics[width=.8\columnwidth]{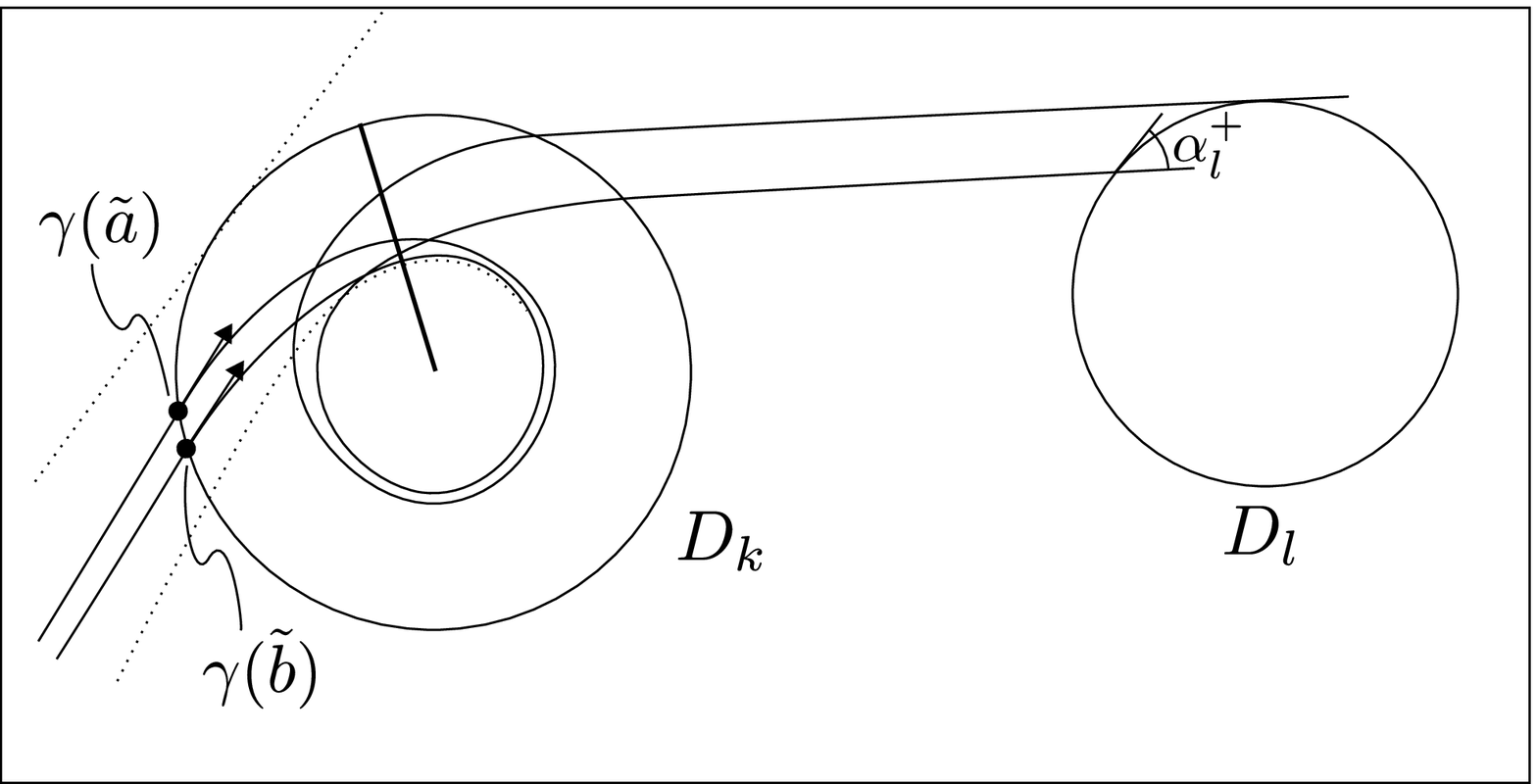}%
\caption{Endpoints of resulting set}%
\label{fig:shootingMechanismB}
\end{subfigure}
\caption{Shooting mechanism for $j = 2$}
\label{fig:shootingMechanism}%
\end{figure}

\begin{lem} \label{shootingMechanismSpecial}
Let the magnetic field $B$ be in general position with respect to the energy $E$. 
Let $k \neq l \in \{1, \dots, n\}$ be different indices of supports and $j \in\bN$. 
Let $\gamma : [a,b] \to U_k$ with $I_k\big(\gamma(a)\big) = -R_k \sqrt{2E}$ and 
$I_k\big(\gamma(b)\big) = \Itr_k$.

Then there exists $[\tilde a, \tilde b] \subset [a,b]$ such that for all points 
$x \in \im \gamma|_{[\tilde a, \tilde b]}$ in the image of $\gamma|_{[\tilde a, \tilde b]}$ we have
\begin{enumerate}
	\item $p^i \big( u(x) \big) \in P_k$ for $0 \le i \le j-1$
	\item $p^j\big( u(x) \big) \notin P_k$
	\item $v \big( p^j(u(x)) \big) \in U_l$
\end{enumerate}
and the curve 
$\gamma_1 = \left.(v \circ p^j \circ u \circ \gamma)\right|_{[\tilde a, \tilde b]}: 
[\tilde a, \tilde b] \to U_l$ 
satisfies $I_l\big(\gamma_1(\tilde a)\big) = -R_l \sqrt{2E}$ and 
$I_l\big(\gamma_1(\tilde b)\big) = I_l^+$.
\end{lem}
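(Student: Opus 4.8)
The plan is to track a one-parameter family of entering points as it is pushed forward by the flow, using the exit angle $\theta^e$ from Section~\ref{Sect:Symm} as the single monotone "dial" that controls where the trajectory leaves $D_k$. First I would reduce to the situation of a single bump: as long as a trajectory stays inside $D_k$ and outside every other $D_m$, the local integral $I_k$ is conserved and all the results of Section~\ref{Sect:Symm} apply verbatim with $q_k$ as origin. Since $I_k(\gamma(a)) = -R_k\sqrt{2E}$ corresponds (in the $B_k(R_k^+)>0$ case) to the left-tangential trajectory and $I_k(\gamma(b)) = \Itr_k$ to the trajectory asymptotic to the outermost circular orbit, the family $\gamma$ interpolates between "no revolution, immediate exit" and "infinitely many revolutions, no exit." By Prop.~\ref{thetaToInfty}, for $I_k(x) < \Itr_k$ the exit angle $\theta^e(x)$ is well defined and tends to $-\infty$ as $I_k(x)\to\Itr_k$, while at the tangential endpoint $\theta^e$ is close to $0$; composing with the homeomorphism between $I_k$ and the entrance angle, $t\mapsto \theta^e(\gamma(t))$ is a continuous function on $[a,b)$ decreasing (in the appropriate sense) from near $0$ to $-\infty$. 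Hence by the intermediate value theorem there is a maximal subinterval $[\tilde a,\tilde b]$ on which the exit direction sweeps through exactly the range of directions that (a) exits after winding around $q_k$ a number of times whose integer part is $j$, i.e. $\theta^e$ ranges over a window of length $2\pi$ sitting inside $(-(j+1)\cdot 2\pi$-ish$, -j\cdot 2\pi$-ish$)$, and (b) aims into the transition corridor $A_{k,l}$, ending on $\partial D_k$ in the arc $A_{k,l}$ between the left tangent line to $D_k,D_l$ and the line hitting $\partial D_k$ at angle $\atr_k$ and $\partial D_l$ at angle $\atr_l$; item~(b) is exactly what the first condition in Def.~\ref{defgenpos} was arranged to make available and free of obstruction by the other $\supp B_m$.

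With $[\tilde a,\tilde b]$ chosen this way, items (1)--(3) are then bookkeeping. The Poincaré section $P_{k,E}$ is a radial segment from $q_k$ to the fixed point $q_k^*\in\partial D_k\setminus\bigcup_l(A_{k,l}\cup B_{k,l})$, and Condition~2 guarantees $q_k^*$ lies outside all corridors, so each full revolution $\theta\mapsto\theta-2\pi$ crosses this radial segment exactly once, transversally (transversality is $\sprod{q-q_k}{Jp}\neq 0$, which holds because $\dot\theta\le c_\rho<0$ by Lemma~\ref{thetaLeZero} as long as the orbit is in $D_k$). Thus "winds $j$-and-a-bit times before exiting toward the corridor" translates precisely into $p^i(u(x))\in P_k$ for $0\le i\le j-1$ and $p^j(u(x))\notin P_k$, while the trajectory next enters $D_l$ (since the corridor $\conv(A_{k,l}\cup B_{l,k})$ misses every other disk, the free flight after exiting $D_k$ through $A_{k,l}$ lands on $\partial D_l$ inside $B_{l,k}$), giving $v(p^j(u(x)))\in U_l$. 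The endpoint matching is the last point: at $t=\tilde a$ the exit direction is the left tangent line common to $D_k$ and $D_l$, so the resulting entering point of $D_l$ is exactly the left-tangential one, i.e. $I_l(\gamma_1(\tilde a)) = -R_l\sqrt{2E}$; at $t=\tilde b$ the exit direction is the line hitting $\partial D_l$ at angle $\atr_l$, which by the definition of $\atr_l$ as the angular image of $\Itr_l$ means $I_l(\gamma_1(\tilde b)) = I_l^+$. (In the case $B_k(R_k^+)<0$ or $B_l(R_l^+)<0$ one replaces left tangents by right tangents and flips the sign of $\theta^e$, invoking Rem.~\ref{integralRThetaGeneral}; the four sign combinations were exactly what the corridor definitions before Def.~\ref{defgenpos} addressed.)

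The main obstacle is making the intermediate-value argument rigorous \emph{simultaneously} in the two requirements (a) and (b) --- i.e., showing that the window of entrance parameters producing precisely $j$ recorded revolutions and the window producing an exit direction into the corridor $A_{k,l}$ actually overlap, and that on the overlap the map $t\mapsto\theta^e(\gamma(t))$ is continuous up to the relevant endpoints. Continuity of $\theta^e$ in the interior follows from continuous dependence of solutions on initial conditions together with transversality of the exit through $\partial D_k$ (the exit is non-tangential except at the single endpoint $t=a$, which we have excluded); the delicate part is the behavior as $t\to\tilde b$, where one must check that the trajectory that exits along the $\atr_l$-line does so cleanly (non-tangentially to $\partial D_k$) so that $\theta^e$ extends continuously, and as $I_k\to\Itr_k$ one must quote Prop.~\ref{thetaToInfty} to know $\theta^e\to-\infty$ so that \emph{every} revolution count $j$ is eventually attained. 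Once $\theta^e$ is known to be continuous and to cover an interval of length $>2\pi$ inside the corridor's angular aperture, the existence of $[\tilde a,\tilde b]$ with the stated properties is immediate, and I would conclude by verifying that the composition $v\circ p^j\circ u\circ\gamma$ is defined and continuous on $[\tilde a,\tilde b]$ --- which it is, since on this interval $u$, the iterates $p^i$, and $v$ all avoid the value $\infty$ by construction and each is continuous where finite (as noted after the definition of the Poincaré map).
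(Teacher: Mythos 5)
Your plan follows essentially the same route as the paper's proof: both use the divergence $\theta^e\to-\infty$ as $I_k\to\Itr_k$ (Prop.~\ref{thetaToInfty}) together with an intermediate-value sweep of the exit ray across the corridor $A_{k,l}$, calibrating the endpoints of $[\tilde a,\tilde b]$ by the left-tangent line to $D_l$ and the $\atr_l$-line, with Cor.~\ref{enteringExitingAngleEqual} and the two conditions of Def.~\ref{defgenpos} playing exactly the roles you assign them. The delicate points you flag (simultaneity of the winding-count and corridor conditions, continuity of $\theta^e$ up to the endpoints) are handled no more rigorously in the paper itself, so your proposal is a faithful reconstruction.
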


This description is valid for the case of the magnetic fields being positive along the 
outermost circular orbits, and the statement (as well as the proof) are also given for this case. 
The general situation is then treated in Rem.~\ref{shootingMechanismGeneral}.

\begin{proof}
Let $q_a, q_b \in \partial D_k$ denote the two endpoints of $\partial A_{k,l}$ 
as shown in Fig.~\ref{fig:q_ab}.

\begin{figure}[ht]%
\centering
\includegraphics[width=0.4\columnwidth]{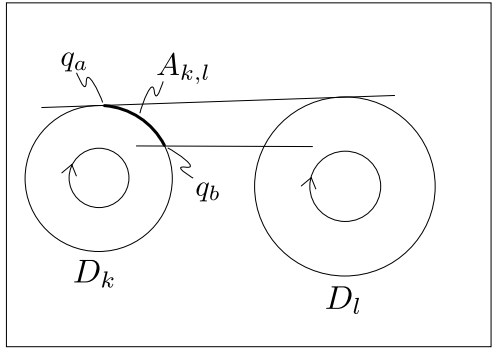}%
\caption{Definition of $q_a$ and $q_b$}%
\label{fig:q_ab}%
\end{figure}

Since $I_k\big(\gamma(s)\big) \to \Itr_k$ for $s\to b$ we have 
$\theta_k(\gamma(s)) \to -\infty$ as $s \to b$ by Lemma~\ref{thetaToInfty}. 
Hence there are parameters $s_a < s_b$ in $[a,b]$ such that the trajectory of 
$\gamma(s_a)$ exits through $q_a$, the trajectory of $\gamma(s_b)$ exits through 
$q_b$, and for $s_a \le s \le s_b$ the forward trajectory hits $P_k$ exactly $j$ 
times before leaving $D_k$. The trajectory of $\gamma(s_a)$ passes $D_l$ 
on the left hand side, and by Cor.~\ref{enteringExitingAngleEqual} the trajectory of 
$\gamma(s_b)$ passes to the right of the line hitting $D_l$ with angle $\atr_l$, 
hence it hits with some angle bigger than $\atr_l$, i.e., its magnetic momentum is bigger 
than $\Itr_l$.

Therefore there are new parameters $\tilde a < \tilde b$ in $[s_a,s_b]$ such that the trajectory of $\gamma(\tilde a)$ passes tangentially to the left of $D_l$, the trajectory of $\gamma(\tilde b)$ hits $D_l$ with angle equal to $\atr_l$, i.e., in a point with momentum equal to $\Itr_l$, and for $s \in [\tilde a, \tilde b]$ the trajectory of $\gamma(s)$ hits $D_l$.

Then $\gamma_1: [\tilde a, \tilde b] \to U_l$ with $\gamma_1(s) = (v \circ p^j \circ u \circ \gamma)(s)$ is well defined and possesses all the properties stated above.
\end{proof}

\begin{rem}\label{shootingMechanismGeneral}
The mechanism in the proof of Lemma~\ref{shootingMechanismSpecial} is manufactured for the situation where both magnetic fields have postive values on their outermost circular orbits. We argue that an analogous result holds for all other cases as well.

Indeed, in the case when the two magnetic fields have different signs on their outermost circular orbits, e.g., when $B_k>0$ and $B_l<0$, we can choose the curve $\gamma_1$ in such a way that the trajectory of $\gamma(\tilde a)$ passes tangentially to the right of $D_l$, i.e., $I_l(\gamma_1(\tilde a)) = R_l \sqrt{2E}$, and the trajectory of $\gamma(\tilde b)$ hits $D_l$ with angle $\atr_l$, i.e., $I_l(\gamma_1(\tilde b)) = \Itr_l$.

Finally, an analogous mechanism works when both $B_k$ and $B_l$ take negative values on their outermost circular orbits.
\end{rem}

The mechanism described above allows us to show that for every half-infinite sequence there is a point realising this prescribed itinerary.

\begin{prop} \label{halfInfiniteSequences}
For any half-infinite sequence $(k_i) \in \{1, \dots, n\}^{\bN_0}$ there exists $x_0 \in P_{k_0}$ such that $p^i (x_0) \in P_{k_i}$ for each $i \ge 0$.
\end{prop}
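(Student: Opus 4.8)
The plan is to iterate Lemma~\ref{shootingMechanismSpecial} and then extract a point from a nested intersection of compact sets. First I would fix the half-infinite sequence $(k_i)\in\{1,\dots,n\}^{\bN_0}$ and, using Condition~2 of general position, a starting segment: choose a curve $\gamma_0:[a_0,b_0]\to U_{k_0}$ whose endpoint $\gamma_0(a_0)$ has trajectory tangent on the appropriate side of $D_{k_0}$ (so that $I_{k_0}(\gamma_0(a_0))=\pm R_{k_0}\sqrt{2E}$) and whose endpoint $\gamma_0(b_0)$ has critical momentum $I_{k_0}(\gamma_0(b_0))=\Itr_{k_0}$. Such a $\gamma_0$ exists because the set of points in $U_{k_0}$ with momentum between the tangential value and the critical value forms an arc, and by Proposition~\ref{thetaToInfty} the exit angle sweeps out all of $(-\infty,\text{tangential value}]$ continuously as the momentum runs from the critical value up to the tangential one, so one can pick the subarc of $\gamma_0$ whose members leave $D_{k_0}$ heading toward $D_{k_1}$ (this is exactly the hypothesis shape needed to apply the Lemma with $j=0$, or one just starts the induction at $i=1$).

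Next I would set up the induction. Given $\gamma_i:[a_i,b_i]\to U_{k_i}$ with $I_{k_i}(\gamma_i(a_i))=\pm R_{k_i}\sqrt{2E}$ and $I_{k_i}(\gamma_i(b_i))=\Itr_{k_i}$, apply Lemma~\ref{shootingMechanismSpecial} (and Remark~\ref{shootingMechanismGeneral} for the sign cases) with $k=k_i$, $l=k_{i+1}$ and, say, $j=0$ — or any fixed $j$, it does not matter — to obtain a subinterval $[a_{i+1},b_{i+1}]=[\tilde a,\tilde b]\subseteq[a_i,b_i]$ and a curve $\gamma_{i+1}=v\circ p^{j}\circ u\circ\gamma_i$ restricted to $[a_{i+1},b_{i+1}]$, mapping into $U_{k_{i+1}}$, again with tangential momentum at one endpoint and critical momentum $\Itr_{k_{i+1}}$ at the other. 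Crucially the Lemma also guarantees $p^{i'}(u(x))\in P_{k_i}$ for $0\le i'\le j-1$ and then the orbit re-enters via $U_{k_{i+1}}$; composing these statements along the induction shows that for every $x\in\im(\gamma_m|_{[a_m,b_m]})$ (pulled all the way back) the forward orbit visits the Poincar\'e sections $P_{k_0},P_{k_1},\dots,P_{k_m}$ in the prescribed order. Thus the set $K_m$ of starting points $\gamma_0(s)$, $s\in[a_m,b_m]$, whose orbits realise the first $m$ symbols is nonempty.

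Then I would close the argument by compactness. The intervals $[a_m,b_m]$ are nested and nonempty, so $\bigcap_m [a_m,b_m]\neq\emptyset$; pick $s_\infty$ in the intersection and set $y_0=\gamma_0(s_\infty)\in U_{k_0}$, and let $x_0=u(y_0)\in P_{k_0}$ (which exists, since $y_0$ lies in the image $\gamma_0([a_1,b_1])$ where the orbit is known to hit $P_{k_0}$). For every $m$, $s_\infty\in[a_m,b_m]$, so by the previous paragraph $p^i(x_0)\in P_{k_i}$ for $0\le i\le m$; since $m$ is arbitrary this gives $p^i(x_0)\in P_{k_i}$ for all $i\ge 0$, which is the claim. (One should note $x_0\in P_{k_0}$ genuinely lies in the open Poincar\'e section rather than its boundary because the orbit of an interior point of $\gamma_0$ crosses $P_{k_0}$ transversally, i.e.\ with $\sprod{q-q_{k_0}}{Jp}\neq 0$; this is where the definition of $P_{k,E}$ with the transversality condition is used.)

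The main obstacle I anticipate is bookkeeping the ``tangential at one endpoint, critical momentum at the other'' normal form consistently through the induction when the signs of $B_{k_i}(R_{k_i}^+)$ vary along the sequence — one must invoke the correct variant of the shooting lemma (Remark~\ref{shootingMechanismGeneral}) at each step and keep track of whether ``tangential'' means to the left or to the right — and, relatedly, making sure the subintervals one selects inside $[s_a,s_b]$ are genuinely nonempty, which is exactly what the general-position hypothesis (Condition~1, guaranteeing the convex transition corridor $\conv(A_{k,l}\cup B_{l,k})$ avoids the other supports $\supp B_m$) is there to provide. Everything else — the nested-interval compactness, the continuity of $u$, $v$, $p$ at interior points — is routine given the results already established.
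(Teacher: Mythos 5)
Your overall architecture --- iterate the shooting lemma, obtain nested parameter intervals, extract a point by compactness --- is exactly the paper's strategy, but there is a genuine gap in how you invoke Lemma~\ref{shootingMechanismSpecial}: the parameter $j$ is \emph{not} free, and ``$j=0$ --- or any fixed $j$, it does not matter'' is wrong. The integer $j$ counts how many times the orbit crosses $P_k$ while circling inside $D_k$ before it transits to $D_l$, i.e.\ it produces exactly $j$ consecutive occurrences of the symbol $k$ in the itinerary. To realise a prescribed sequence you must group $(k_i)$ into maximal blocks of equal consecutive symbols and apply the lemma once per block, with $j$ equal to the block length and $l$ equal to the \emph{next distinct} symbol (note the lemma requires $k\neq l$, so your per-symbol scheme with $k=k_i$, $l=k_{i+1}$ is not even applicable when $k_i=k_{i+1}$). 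As written, your induction realises only sequences with no repeated consecutive symbols (with $j=1$), or --- with $j=0$, which is outside the lemma's hypothesis $j\in\bN$ and would mean the orbit never registers a crossing of $P_{k_i}$ --- no itinerary at all. Since the proposition must cover arbitrary sequences in $\{1,\dots,n\}^{\bN_0}$, this is a real gap, not bookkeeping.

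A second, related omission: the block-based iteration only works when $(k_i)$ has infinitely many blocks, i.e.\ does not become eventually constant. If $k_i=k$ for all $i\ge i_0$, no finite $j$ suffices, and the paper handles this separately by stopping the iteration at index $i_0$ and choosing the endpoint with critical momentum $\Itr_k$, whose trajectory spirals onto the outermost circular orbit and therefore crosses $P_k$ infinitely often (Prop.~\ref{thetaToInfty}). Your proposal does not address this case. The remaining ingredients of your argument --- the construction of the initial curve $\gamma_0$, the nested-interval compactness, and the transversality remark ensuring $x_0$ lies in the open section $P_{k_0}$ --- are fine and match the paper.
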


\begin{proof}
We start with a curve $\gamma: [0,1] \to U_{k_0}$ such that $I_{k_0}(\gamma(0)) = -R_{k_0} \sqrt{2E}$ (respectively $R_{k_0}\sqrt{2E}$ if $B_{k_0}$ is negative along the orbit) and $I_{k_0}(\gamma(1)) = \Itr_{k_0}$.

Let us assume first that $(k_i)$ will not become constant eventually. Let $j_1$ denote the number of consecutive values of $k_0$, i.e., $k_i = k_0$ for $i \le j_1 - 1$ and $k_{j_1} \neq k_0$. Then, by Lemma~\ref{shootingMechanismSpecial}, there are a parameter interval $[a^{(1)}, b^{(1)}] \subset [0,1]$ such that $p^i(u(\gamma(s))) \in P_{k_0} = P_{k_i}$ for $0 \le i \le j_1 - 1$ and some corresponding curve $\gamma_1: [a^{(1)}, b^{(1)}] \to U_{k_{j_1}}$. Applying Lemma~\ref{shootingMechanismSpecial} to the curve $\gamma_1$ and the number $j_2$ of consecutive values of $k_{j_1}$, we get $[a^{(2)}, b^{(2)}] \subset [a^{(1)}, b^{(1)}]$ such that $p^i(u(\gamma(s))) \in P_{k_{j_1}} = P_{k_i}$ for all $s \in [a^{(2)}, b^{(2)}]$ and $j_1 \le i \le j_1 + j_2 - 1$.

Hence, by iteration, we get $[a^{(m)}, b^{(m)}] \subset [a^{(m-1)}, b^{(m-1)}]$ such that $p^i (u(\gamma(s))) \in P_{k_i}$ for $i$ (at least) up to $m - 1$. Then there is a point $\bar x \in \bigcap_{m \ge 0} \gamma( [a^{(m)}, b^{(m)}] ) \neq \emptyset$ and hence $x_0 = u(\bar x) \in P_{k_0}$ satisfies $p^i (x_0) \in P_{k_i}$ for $i \ge 0$, which proves the claim if $(k_i)$ does not become constant.

If $(k_i)$ becomes constant eventually, we follow the same procedure but at the index where $(k_i)$ becomes constant, we simply choose the point with critical momentum and need not iterate further.
\end{proof}

In order to show that every bi-infinite sequence can be realised by some trajectory, we shift the sequence and show that the points obtained by Prop.~\ref{halfInfiniteSequences} converge to a point which has the prescribed itinerary. Let us define 
$$ \Lambda_E^{'} := \big\{ x\in \Lambda_E \mid 
\norm{q(t,x) - q_k} \ge R_k^+\ \forall\ t \in \bR,\ k = 1, \dots, n\} \big\}$$ 
as the subset of points whose trajectories stay outside the open disks 
$\{ \norm{q-q_k} < R_k^+ \}$ for all times.

In a first step, we need to show that $h|_{\Lambda_E^{'}}$ is surjective. Let $(k_i) \in \{1, \dots, n \}^\bZ$ be given. For each $m \in \bN$, Prop.~\ref{halfInfiniteSequences} yields a point $y_m \in P_{k_{-m}}$ such that $p^i(y_m) \in P_{k_{i - m}}$ for $i \ge 0$, and we define $$ x_m := p^m(y_m) \in P_{k_0} . $$ Since $\overline{P_{k_0}}$ is compact we have a convergent subsequence, which we denote by $x_m$ again, i.e., we have $x_m \to x_\infty \in \overline{P_{k_0}}$ as $m \to \infty$. We claim that $x_\infty$ lies in $\Lambda_E^{'}$ and satisfies $h(x_\infty) = (k_i)$, i.e., $$ p^i(x_\infty) \in P_{k_i} $$ for all $i\in\bZ$.

In order to prove that $x_{\infty}\in \Lambda_E^{'}$ we first claim that the trajectory of $x_\infty$ does not intersect $\partial P_E$. Arguing by contradiction, we assume that this happens at the index $i=0$, say, so that we have
\[ x_\infty = (q_\infty, p_\infty) \in \partial P_{k_0,E} = \{ (q,p) \in \Sigma_E \sd q = q_{k_0} \text{ or } q = q_{k_0}^* \text{ or } \sprod{q-q_{k_0}}{Jp} = 0 \} . \]
Now, $\sprod{q_\infty - q_{k_0}}{Jp_\infty} = 0$ is not possible since $\sprod{q - q_{k_0}}{Jp} = \norm{q-q_{k_0}}^2 \dot\theta < 0$ by Lemma~\ref{thetaLeZero}. This also implies that $q=q_{k_0}$ cannot occur. Therefore, we must have $q_\infty = q_{k_0}^*$ where three cases can occur, each of which will lead to a contradiction.

\emph{Case 1}: $k_1 \neq k_0$. Then the $q$-space trajectories $q(t,x_m)$ hit $D_{k_1}$, and by continuity of $q$ and closedness of the support, so does $q(t,x_\infty)$. But this contradicts the choice of $q_{k_0}^* \notin A_{k_0,k_1}$, as shown in Fig.~\ref{fig:ContradictionA-kl}.

\begin{figure}[ht]%
\centering
\includegraphics[width=0.4\columnwidth]{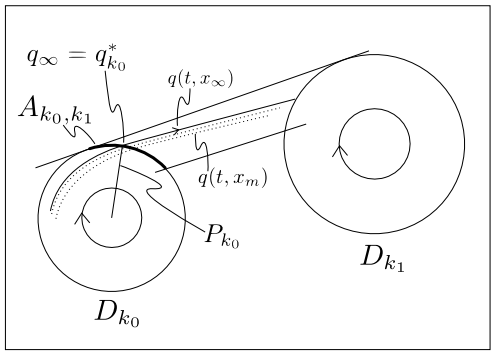}%
\caption{Contradiction in case 1}%
\label{fig:ContradictionA-kl}%
\end{figure}

\emph{Case 2}: $k_{-1} \neq k_0$. Similar to Case 1, the backward trajectory of $x_\infty$ in $q$-space would hit $D_{k_{-1}}$, contradicting the choice of $q_{k_0}^* \notin B_{k_0, k_{-1}}$.

\emph{Case 3}: $k_{-1} = k_0 = k_1$. Then $\norm{q(t,x_m) - q_{k_0}} < R_{k_0}$ for $|t| < t_0$ with $t_0 > 0$ independent of $m$. Hence $\norm{q(t,x_\infty) - q_{k_0}} \le R_{k_0} = \norm{q_\infty - q_{k_0}}$ for $|t| < t_0$. Then $\sprod{q_\infty - q_{k_0}}{p_\infty} = \left.\ddt \frac{1}{2} \norm{q(t,x_\infty) - q_{k_0}}^2\right|_{t=0} = 0$, i.e., the trajectory passes tangentially to $D_{k_0}$ and satisfies $\norm{q(t,x_\infty)- q_{k_0}} > R_{k_0}$ for small $|t|$. But this contradicts $\norm{q(t,x_m) - q_{k_0}} < R_{k_0}$.

Therefore the assumption $x_\infty \in \partial P_{k_0}$ is false and, hence, the trajectory of $x_\infty$ does not intersect $\partial P_E$ at all. By continuity we finally get \[ p^i(x_\infty) = \lim_{m \to \infty} p^i(x_m) \in P_{k_i} \] for all $i \in \bZ$. This finishes the proof that $h|_{\Lambda_E^{'}}$ is surjective.

It remains to show that $h|\Lambda_E^{'}$ is continuous. For this it is sufficient to show that the time between two consecutive intersections of the Poincaré section is uniformly bounded. Here, two cases are possible, namely the intersections can occur in the same support, or in different supports. Lemma~\ref{thetaLeZero} gives a bound for the time how long a trajectory can stay inside the outer annulus in one support. Furthermore, outside the support the trajectories are straight lines and, since $\norm{\dot{q}} = \sqrt{2E}$, the time is proportional to the length of the trajectory. Therefore, we have a uniform bound for the length of the segments between two consecutive intersections of the orbit with the Poincaré section and a uniform bound for the time. This proves the continuity of $h$ on $\Lambda_E^{'}$. 

Thus we have found a $p$-invariant set $\Lambda_E^{'} \neq\emptyset$ on which $h$ is continuous and surjective. This finishes the proof of our main result, Theorem~\ref{thmSymDyn}.


\section{Conclusion and further remarks} \label{Sect:Conclusion}

We have shown that a planar magnetic field with disjoint localized rotationally symmetric 
peaks creates positive topological entropy. 
To the best of our knowledge, this is the first instance where such systems have 
been investigated. In some sense, the classical potential analogue of our magnetic 
system should be the motion of a charged particle in a potential field with several Coulomb-type singularities. 
This system has been extensively studied in the literature, and it is well known that it carries positive topological entropy.

We point out that the existence of a second integral, although we made some advantage of it in our proofs, is not really necessary. Indeed, we only need the rotational symmetry in the region outside the outermost circular orbit. Inside the discs of radii $R_k^+$, the magnetic field may behave as wildly as it wants; this will not affect any of our constructions. Actually, we expect that even outside the discs of radii $R_k^+$ we do not need any symmetry. The mere existence of a (hyperbolic) outermost closed orbit bounding a convex region in the $q$-plane should be enough to detect symbolic dynamics. This will be the topic of future research.

Finally, the question arises whether there are higher-dimensional analogues of a magnetic field with localized peaks. At the moment, however, it is not clear what an appropriate generalization should be. Again, this will be investigated in the near future.


\begin{thebibliography}{x}

\bibitem[AM]{am}
%
R. Abraham, J.E. Marsden: {\em Foundations of Mechanics.} 
2nd edition, Benjamin, 1978
%
\bibitem[BP]{bp}
K. Burns, G. Paternain:  \emph{Anosov magnetic flows, critical values and topological entropy.}
Nonlinearity  {\bf 15}, 281--314 (2002)
%
\bibitem[DG]{dg}J. Derezi\'nski, C. G\'erard: {\em Scattering theory of 
classical and quantum N-particle systems.} Springer, 1997
%
\bibitem[Ga]{Ga}
P. Gaspard: 
\emph{Chaos, scattering and statistical mechanics.}
Cambridge University Press,  2005
%
\bibitem[Gi]{Gi}
V. Ginzburg:
\emph{A smooth counterexample to the Hamiltonian Seifert conjecture in $\bR^6$.}
Int. Math. Res. Notices {\bf 13}, 641--650 (1997) 
%
\bibitem[Gr]{Gr}
St. Grognet:  \emph{Entropies des flots magn\' etiques.}
Annales de l' I.H.P.  {\bf A 71}, 395--424 (1999)
%
\bibitem[KH]{KH}
A. Katok, B. Hasselblatt:
\emph{Introduction to the Modern Theory of Dynamical Systems.}
Cambridge University Press,
Encyclopedia of Mathematics and its Applications {\bf 54}, 1995
%
\bibitem[KK]{KK}
A. Knauf, M. Krapf: 
\emph{The non-trapping degree of scattering.} Nonlinearity 21 2023-2041 (2008)
%
\bibitem[Kn1]{kn1}
A. Knauf: {\em Qualitative Aspects of Classical Potential Scattering.}
Regular and Chaotic Dynamics,  {\bf 4}, No.1, 1--20 (1999)
%
\bibitem[Kn2]{kn2}
A. Knauf: {\em The $n$-centre problem of celestial mechanics for large 
energies.} J. Eur. Math. Soc. {\bf 4}, 1-114 (2002).
%
\bibitem[MR]{marsden} 
J.E. Marsden, T.S. Ratiu:
\emph{Introduction to Mechanics and Symmetry}, Springer, 1999
%
\bibitem[Mi]{Mi}
J.
Miranda:
\emph{Positive topological entropy for magnetic flows on surfaces.}
Nonlinearity {\bf 20}, 2007--2031 (2007)
%
\bibitem[Ni]{Ni}
C. Niche:  
\emph{Topological entropy of a magnetic flow and the growth of the number of trajectories.}
Discrete and Continuous Dynamical Systems {\bf 11}, 577--580 (2004)
%
\bibitem[Pa]{Pa}
G. Paternain:
\emph{Magnetic Rigidity of Horocycle Flows.} Pacific J. Math. {\bf 225}, 301-323 (2006)
%
\bibitem[PS]{PS}
N. Peyerimhoff, K.F. Siburg:
\emph{The dynamics of magnetic flows for energies above Ma\~n\'e's critical value.}
Israel Journal Math. {\bf 135}, 269--298 (2003)
%
\bibitem[RR]{RR} 
A. Rapoport, V. Rom-Kedar: \emph{Chaotic scattering by steep repelling potentials.}
Phys. Rev. {\bf E 77}, 016207 (2008)
%
%
\bibitem[Si]{Si} 
Y. Sinai, Ed.: \emph{Dynamical Systems II.} Encyclopaedia of MathematicalSciences, Vol.~2. Berlin, Heidelberg, New York: Springer 1989
%
\bibitem[Sm]{Sm}
U. Smilansky:  \emph{The Classical and Quantum Theory of Chaotic Scattering.}
Lecture Notes. Les Houches, 1989
%
\bibitem[Wa]{Wa}
P. Walters: \emph{An Introduction to Ergodic Theory.}
Springer Graduate Texts in Mathematics  {\bf 79}, 1982
\end{thebibliography}
\end{document}